\newtheorem{rem}[theorem]{Remark}
\title{Unique recovery of piecewise analytic density and stiffness tensor
  from the elastic-wave Dirichlet-to-Neumann map}
\author{Maarten V. de Hoop\thanks{Simons Chair in Computational and
    Applied Mathematics and Earth Science, Rice University, Houston,
    TX 77005, USA (\tt{mdehoop@rice.edu}).}
\and
Gen Nakamura\thanks{Department of Mathematics, Hokkaido University,
  Sapporo 060-0810, Japan (\tt{nakamuragenn@gmail.com}).}
\and
Jian Zhai\thanks{Department of Mathematics,
 University of Washington, Seattle, WA 98195, USA
  (\tt{jian.zhai@outlook.com}).}}
\begin{document}
\maketitle
\slugger{siap}{xxxx}{xx}{x}{x--x}%slugger should be set to mms, siap, sicomp, sicon, sidma, sima, simax, sinum, siopt, sisc, or sirev

\begin{abstract}
We study the recovery of piecewise analytic density and stiffness
tensor of a three-dimensional domain from the local dynamical
Dirichlet-to-Neumann map. We give global uniqueness results if the
medium is transversely isotropic with known axis of symmetry or
orthorhombic with known symmetry planes on each subdomain. We also
obtain uniqueness of a fully anisotropic stiffness tensor, assuming
that it is piecewise constant and that the interfaces which separate the
subdomains have curved portions. The domain partition need not to be
known. Precisely, we show that a domain partition consisting of
subanalytic sets is simultaneously uniquely determined.
\end{abstract}

\begin{keywords}inverse boundary value problem, elastic waves, anisotropy\end{keywords}

\begin{AMS}35R30, 35L10\end{AMS}

\pagestyle{myheadings} \thispagestyle{plain} \markboth{Unique recovery of density and stiffness tensor}{Maarten
  V. de Hoop, Gen Nakamura and Jian Zhai}

\section{Introduction}${}$\newline\indent
We study the recovery of piecewise analytic density and stiffness
tensors of a three-dimensional domain from the local dynamical
Dirichlet-to-Neumann map. We introduce a domain partition and consider
anisotropy and scattering off the interfaces separating the subdomains
in the partition. This has been considered as an open problem in exploration seismology where anisotropy reveals
critical information on earth materials, microstructure in geological
formations, and stress. The stress inducted anisotropy is analyzed in \cite{JR,TE}

We let $\Omega\subset\mathbb{R}^3$ be a bounded domain with smooth
boundary $\partial\Omega$ and $y=(y^1,y^2,y^3)$ be Cartesian coordinates. We consider the following initial boundary
value problem for the system of equations describing elastic waves
\begin{equation}\label{EQ no1}
\begin{cases}
\rho\partial^2_tu=\operatorname{div} (\mathbf{C}\varepsilon(u))=:Lu~~\text{in}~\Omega_T=\Omega\times(0,T) ,\\
u=f~~\text{on}~\partial\Omega\times(0,T) ,\\
u(y,0)=\partial_t u(y,0)=0~~\text{in}~\Omega
\end{cases}
\end{equation}
with $f(y,0)=0$ and $\frac{\partial}{\partial t}f(y,0)=0$ for $y\in\partial\Omega$.
Here, $u$ denotes the displacement vector and
\[\varepsilon(u)=(\nabla u+(\nabla u)^T)/2=(\varepsilon_{ij}(u))=\frac{1}{2}\left(\frac{\partial u_i}{\partial y^j}+\frac{\partial u_j}{\partial y^i}\right)\]
the linear strain tensor which is the symmetric part of $\nabla u$. Furthermore, $\mathbf{C}=(C^{ijkl})=(C^{ijkl}(y))$ is the stiffness
tensor and $\rho=\rho(y)$ is the density of mass, which are piecewise analytic on $\overline\Omega$. 

Here, the hyperbolic or dynamical Dirichlet-to-Neumann map (DN map)
$\Lambda_T$ is given as the mapping
\begin{equation}\label{Lambda_T}
\Lambda_T: f\mapsto\partial_L u:=(\mathbf{C}\varepsilon(u))\nu|_{\partial\Omega},
\end{equation}
where $u$ is the solution of (\ref{EQ no1}),
$\mathbf{C}\varepsilon(u)$ is a $3\times3$ matrix with its $(i,j)$
component $(\mathbf{C}\varepsilon(u))^{ij}$ given by
$(\mathbf{C}\varepsilon(u)) ^{ij}=\sum_{k,l=1}^3
C^{ijkl}\varepsilon_{kl}(u)$, $\nu$ is the outward unit normal to
$\partial\Omega$. Physically, $\partial_L u$ signifies the normal traction at
$\partial\Omega$. Its mapping property, that is, the domain and target
spaces, will be specified in Section~2. Actually, we will consider a local DN map which is a localized version of the DN map. 
We are using the (full) DN map here just for simplicity.

It is physically natural to assume that $\rho$ is bounded away from
$0$ on $\overline{\Omega}$ and that the stiffness tensor $\mathbf{C}$
satisfies the following symmetries and strong convexity condition:

\begin{itemize}
\item (symmetry) $C^{ijkl}(x)=C^{jikl}(x)=C^{klij}(x)$ for any $x\in\overline\Omega$ and $i,j,k,l$; 
\item (strong convexity) there exists a $\delta>0$ such that for any 
$3\times 3$ real-valued symmetric matrix $(\varepsilon_{ij})$,
\[
   \sum_{i,j,k,l=1}^3C^{ijkl}\varepsilon_{ij}\varepsilon_{kl}\geq\delta\sum_{i,j=1}^3\varepsilon_{ij}^2.
\]
\end{itemize}

We first consider the following inverse boundary value problem: Can one
determine $C^{ijkl}$ and $\rho$ (as well as all their derivatives) at
the boundary from $\Lambda_T$? This inverse problem is referred to as the
\textit{boundary determination}.  Concerning the uniqueness, this question was first answered by
Rachelle \cite{Rachelle} for the isotropic case, that is
\[\mathbf{C}=\left(\lambda\delta^{ij}\delta^{kl}+\mu(\delta^{ik}\delta^{jl}+\delta^{il}\delta^{jk})\right).\]
Her method depends on the decoupling of S- and P- waves on the
boundary. This separation of polarizations, however, is not required
for our proof.

There exist different techniques for showing the determination of coefficients of
elliptic equations. One common way is to
view the DN map (for some elliptic PDE) as a pseudodifferential
operator, and to recover the material parameters at the boundary from its
symbol. This was first proposed by Sylvester and Uhlmann \cite{SU} for
the equation describing electrostatics,
\[\nabla\cdot(\gamma\nabla u)=0\,\,\text{with conductivity}\,\, 0<\gamma\in C^\infty(\overline\Omega).\]
The Dirichlet-to-Neumann map is defined by
\[\Lambda_\gamma:H^{1/2}(\partial\Omega)\ni\varphi\rightarrow \gamma\frac{\partial u}{\partial\nu}\in H^{-1/2}(\partial\Omega),\]
where $u$ is the solution of the above equation with $u=\varphi$ on $\partial\Omega$. The symbol of $\Lambda_\gamma$ simply has the leading order term (the principal symbol) 
\[\sigma(\Lambda_\gamma)(x',\xi')=\gamma(x')|\xi'|.\]
Here $(x',\xi')\in T^*(\partial\Omega)$ and $|\xi'|$ is the length of the cotangent vector with respect to the metric on $\partial\Omega$ from the Euclidean metric of $\mathbb{R}^3$.
It is almost immediate to recover $\gamma$ from
$\sigma(\Lambda_\gamma)$ \cite{SU}. The derivatives of $\gamma$ can be recovered
from the lower order terms of the full symbol. For elastostatics, the
reconstruction was given in \cite{NU3, NU} for the isotropic case and
in \cite{NTU} for the transversely isotropic case. The same approach
was also applied to (time-harmonic) Maxwell's equations
\cite{M, S}. We remark here that the calculation of the principal
symbol of the DN map for the elastic system is quite challenging.

In our previous paper \cite{dHNZ}, we show that via a finite-time Laplace transform, we can reduce the dynamical problem to an elliptic one: determine the isotropic $C^{ijkl}$ and $\rho$ at the boundary from $\Lambda^h$, where $\Lambda^h$ is the DN map corresponding to the elliptic system of equations
\begin{equation}\label{transformed eq}
\begin{cases}
\mathcal{M}v=\rho v - h^2\operatorname{div} (\mathbf{C}\varepsilon(v))=0~~\text{in}~\Omega,\\
v=\varphi~~\text{on}~\partial\Omega
\end{cases}
\end{equation}
with a parameter $h$ which is the reciprocal of the Laplace variable $\tau>0$, and
$\Lambda^h$ is defined by
\[\Lambda^h:  H^{1/2}(\partial\Omega)\ni\varphi\mapsto h\partial_L v=h(\mathbf{C}\varepsilon(v))\nu|_{\partial\Omega}\in H^{-1/2}(\partial\Omega).\]
%and viewed as a semiclassical pseudodifferential operator with small parameter $h$.

In general, we do not have the exact $\Lambda^h$ from $\Lambda_T$. However, if we view $\Lambda^h$ as a semiclassical pseudodifferential operator with a small parameter $h$, we can recover the full symbol of $\Lambda^h$ from $\Lambda^T$. Then we expect to reconstruct the material parameters at the boundary from the full symbol of $\Lambda^h$. We refer to the book by Zworski \cite{Zworski} for an introduction to semiclassical pseudodifferential operators.

Generally, we believe that it is impossible to reconstruct a fully
anisotropic elastic tensor from the dynamical DN map. However, there
are some physically important symmetry restrictions -- while allowing
the presence of interfaces -- that are more general than isotropy, on
the stiffness tensor, under which we can still have an explicit
reconstruction. For an introduction of these symmetries, we refer to
Tanuma \cite{Tanuman} and Musgrave \cite{Mus}. In this paper, we will first survey to what extent we
can recover anisotropy.
 
%For a point $y\in\partial\Omega$, let $(y_1,y_2,y_3)$ be a Cartesian coordinate centered at $x$, we say $\mathbf{C}(y)$ is transversely isotropic (TI) with symmetry axis $y_3$, if the nonzero components of $C_{ijkl}(y)$ are
%\[C_{1111},~C_{2222},~C_{3333},~C_{1122},~C_{1133},~C_{2233},~C_{2323},~C_{1313},~C_{1212}\]
%and
%\[C_{1111}=C_{2222}, ~~C_{1133}=C_{2233},\]
%\[C_{2323}=C_{1313},~~C_{1212}=\frac{1}{2}(C_{1111}-C_{1122}).\]
%
%If $y_3$ is the outer normal direction at $y$, we say $\mathbf{C}(y)$ is vertically transversely isotropic (VTI) at $y$. If $y_3$ is not the outer normal direction, we say $\mathbf{C}(y)$ is tilted transversely isotropic (TTI). Still assume $y_3$ is the outer normal direction at $x$. We say $\mathbf{C}(y)$ is orthorhombic, if the nonzero components of $\mathbf{C}(y)$ are
%\[C_{1111},~C_{2222},~C_{3333},~C_{1122},~C_{1133},~C_{2233},~C_{2323},~C_{1313},~C_{1212}.\]

We will give an explicit reconstruction formula of $C^{ijkl}$ at part of the boundary $\Sigma\subset\partial\Omega$, if either of the following three conditions holds:

\begin{enumerate}
\item $\Sigma$ is flat, $\mathbf{C}$ is \textit{transversely isotropic} (TI) with symmetry axis normal to $\Sigma$ (\textit{vertically transversely isotropic} VTI);
\item $\Sigma$ is flat, $\mathbf{C}$ is \textit{orthorhombic} with one of the three (known) symmetry planes tangential to $\Sigma$;
\item $\Sigma$ is curved, $\mathbf{C}$ and $\rho$ are constant.
\end{enumerate}

In elastostatics, Nakamura, Tanuma and Uhlmann \cite{NTU} gave an
explicit reconstruction scheme for the transversely isotropic
stiffness tensor assuming that the symmetry axis is tilted, that is,
not normal to the boundary (TTI), while the information is not enough
to recover the VTI case \cite{NT}. However, we can recover VTI elastic
parameters from the semiclassical symbol of $\Lambda^h$. Generally
speaking, this is because we have more information in dynamical data
than in static data. We will give further explanation in Section
\ref{rec}.

For the \textit{interior determination} from $\Lambda_T$ with $T$ large enough, uniqueness of smooth isotropic elastic tensor and density was shown under different geometrical conditions \cite{Rachelle1, Rachelle2, SUV4, Bhatta}.
We will study the \textit{interior determination} of piecewise analytic parameters based on our \textit{boundary determination} results,. For elliptic equations, the \textit{boundary determination} usually
leads to the uniqueness of \textit{interior determination} of
piecewise analytic coefficients. Kohn and Vogelius \cite{KV} first
established the relation in electrostatics. A recent paper by
C\^arstea, Honda and Nakamura \cite{CN} gives a uniqueness theorem for
piecewise constant stiffness tensors. The key in the proof is the continuation of the local elliptic DN map (see Section \ref{semi} for the definition). If the coefficients of elliptic equations are discontinuous, a variational argument is convenient for
this continuation. Ikehata \cite{Ike} gave such an argument in order to construct the physical parameters in an inclusion. In \cite{CN}, the authors adapted this variational argument for the continuation of the local elliptic DN map.  Runge's approximation plays an important role
in the continuation of data, which is in turn guaranteed by the Holmgren's uniqueness theorem. 

For
our problem, we need to know the exact operator $\Lambda^h$, not only
its full symbol. To get $\Lambda^h$, basically we need to have
$\Lambda_{T'}$ for any $T'$. This is possible by time continuation of
$\Lambda_T$, if $T$ is large enough, and the assumption that $\mathbf{C},\rho$ are
piecewise analytic. Also, with the exact $\Lambda^h$, we can view it
as a classical pseudodifferential operator. Under this classical
setting, we can also recover tilted transversely isotropic (TTI)
elastic parameters.

The time continuation is established with the boundary control (BC)
method. We basically follow the steps sketched by Kurylev and Lassas
\cite{KL}. The BC method was first introduced by Belishev
\cite{Beli}. Essentially, we need $T>2r$, where $r$ is the
approximate controllability time, and will be given in Lemma \ref{Holmgren}. With the assumption of piecewise analyticity, the existence of the approximate
controllability time is guaranteed by the unique continuation
principle (UCP) for lateral Cauchy data, which is essentially the
Holmgren-John uniqueness theorem. Indeed, relaxing the analyticity of the material parameters
  would require a very different method of proof. For acoustic wave equations, a uniqueness result
  for the piecewise smooth case under restrictive geometric conditions
  has been shown to be feasible \cite{CadaydHKU}.

The paper is organized in the following manner. In Section \ref{semi},
we show how to reduce the hyperbolic problem to an elliptic one, and
establish the relation between the dynamic $\Lambda_T$ to the symbol
of $\Lambda^h$. In Section \ref{cont}, we study the time continuation
of $\Lambda_T$ with piecewise analytic coefficients. In Section
\ref{symbol}, we introduce the boundary normal coordinates, and obtion
the symbol of $\Lambda^h$ in these coordinates via a factorization of
the operator $\mathcal{M}$ in (\ref{transformed eq}). % In Section
%\ref{calculation}, we carry out the explicit reconstruction procedure
%of the \textit{boundary determination} for the density and stiffness
%tensor with certain symmetries. In Section \ref{homo}, we also show
%the explicit reconstruction procedure of the determination of a
%constant density and fully anisotropic stiffness tensor.
%
Finally, in Section \ref{piecewise}, we show the uniqueness of interior determination for the
piecewise analytic material parameters.

\section{Transformation to an elliptic problem}\label{semi}${}$\newline\indent
In this section, we show how to reduce the hyperbolic problem
$(\ref{EQ no1})$ to the elliptic problem $(\ref{transformed eq})$. We
will give a modified exposition of what is given in
\cite{dHNZ}. Throughout this section, we assume that
$\mathbf{C},\rho\in L^\infty(\Omega)$. We consider the local DN
map. We introduce an open connected smooth part $\Sigma \subset \partial\Omega$.

For $r\ge0$ we let $H_{co}^{r}(\Sigma)$ be the closure in $H^{r}(\Sigma)$ of the set
\[C_c^\infty(\Sigma)=\{f\in C^\infty(\partial\Omega):~\text{supp}~f\subset\Sigma\},\]
and $H^{-r}(\Sigma)$ be its dual. We note that when
$\Sigma=\partial\Omega$, $H_{co}^{r}(\Sigma)=H^{r}(\Sigma)$. Then we define
the local DN map $\Lambda^{\Sigma}_{T}$ by
\[\Lambda^{\Sigma}_{T}:C^2([0,T];H_{co}^{1/2}(\Sigma))\ni f\mapsto \mathbf{C}\varepsilon(u)\nu\vert_{\Sigma\times[0,T]}\in L^2([0,T];H^{-1/2}(\Sigma)),\]
where $u$ solves (\ref{EQ no1}).

We also define the local DN map $\Lambda^{h,\Sigma}$ for the elliptic problem $(\ref{transformed eq})$ by
\[\Lambda^{h,\Sigma}:H_{co}^{1/2}(\Sigma)\ni \varphi\mapsto h\mathbf{C}\varepsilon(v)\nu\vert_{\Sigma}\in H^{-1/2}(\Sigma),\]
where $v$ solves the equation (\ref{transformed eq}).
We let $\psi\in H^{1/2}_{co}(\Sigma)$, $\chi(t)=t^2$, and $f(x,t)=\chi(t)\psi(x)$. We take $u_0\in H^{1}(\Omega)$ (by inverse trace theorem) such that $u_0=\psi$ on $\partial\Omega$ and satisfies
\[
\operatorname{div} (\mathbf{C}\varepsilon(u_0))=0~~\text{in}~\Omega
\]
 with the estimate
\[\|u_0\|_{H^1(\Omega)}\leq C\|\psi\|_{H^{1/2}_{co}(\Sigma)}. \]
Then we seek a solution $u$ of \eqref{EQ no1} in the form
\[u(y,t)=\chi(t) u_0(y)+ u_1(y,t),\]
where $u_1(\cdot,t)\in L^2((0,T);H^1_0(\Omega))$ with 
\[\partial_t u_1(\cdot,t)\in L^2((0,T); L^2(\Omega)), \partial_t^2 u_1\in L^2((0,T); H^{-1}(\Omega))\]
solves the following system in the weak sense,
\begin{equation}\label{eq no2}
\begin{cases}
\rho\partial^2_tu_1-\operatorname{div} (\mathbf{C}\varepsilon(u_1))= F(y,t)~~\text{in}~\Omega_T=\Omega\times(0,T) ,\\
u_1=0~~\text{on}~\Sigma=\partial\Omega\times(0,T) ,\\
u_1(y,0)=\partial_t u_1(y,0)=0~~\text{in}~\Omega,
\end{cases}
\end{equation}
where% $H^{-1}(\Omega):=(H_0^1(\Omega))^*$ and 
\[F(y,t)=-2\rho u_0\in L^2((0,T);L^2(\Omega)).\]
Problem \eqref{eq no2} is equivalent to solving for $u_1\in L^2((0,T);H^1_0(\Omega))$ with $\partial_t u_1(\cdot,t)\in L^2((0,T); L^2(\Omega))$, $\partial_t^2 u_1\in L^2((0,T); H^{-1}(\Omega))$ which satisfies 
\begin{equation}\label{weak form}
-\int_0^T (\rho\partial_t u_1, \partial_t v)\,dt+\int_0^T B[u_1(\cdot,t),v(\cdot,t)]\,dt=\int_0^T (F(\cdot,t), v(\cdot,t))\,dt
\end{equation}
for any $v\in C_0^\infty([0,T]; H_0^1(\Omega))$, where $(\cdot\,,\cdot)$ is the $L^2(\Omega)$ inner product, 
\[B[\varphi,\psi]=\int_\Omega \mathbf{C}\varepsilon(\varphi)::\varepsilon(\psi)\mathrm{d}y,\,\,\varphi, \psi\in H^1_0(\Omega),\]
and the notation $::$ denotes the inner product of matrices.

It is well known (cf. \cite{LM}) that there exists a unique solution
$u_1\in L^2((0,T);H^1_0(\Omega))$ of \eqref{weak form} with $\partial_tu_1\in L^2((0,T);L^2(\Omega))$, $\partial_t^2 u_1\in L^2((0,T); H^{-1}(\Omega))$.
By possibly modifying the value of $u_1$ in a zero-measure set,
\[u_1\in C^1([0,T],H^{-1}(\Omega))\cap C^0([0,T],L^2(\Omega))\] while it satisfies the estimate
\begin{equation}
\|u_1(\cdot,t)\|_{L^2(\Omega)}+\|\partial_tu_1(\cdot, t)\|_{H^{-1}(\Omega)}\leq C\|F\|_{L^2((0,T);L^2(\Omega))},\,\,t\in[0,T]
\end{equation}
%for some constant $C>0$ independent of $F$. Further since $\partial_t F\in L^2((0,T);L^2(\Omega))$, $u_1$ has a better regularity:
%$\partial_t^j u_1\in L^\infty((0,T); H^{2-j}(\Omega)),\, j=0,1,2$ with the estimate
%\begin{equation}
%\sum_{j=0}^2\Vert\partial_t^j u_1(\cdot,t)\Vert_{H^{2-j}(\Omega)}\le C(\Vert F\Vert_{L^2((0,T);L^2(\Omega))}+\Vert \partial_t F\Vert_{L^2((0,T); L^2(\Omega))}),\,\,\text{a.e.}\,t\in(0,T)
%\end{equation}
%for some another constant $C>0$ independent of $F$ 
(see \cite{Evans, LM} for the details of these).
Therefore,
\[
\int_0^T \partial_t^2u_1(\cdot, t)e^{-\tau t}\mathrm{d}t=\tau^2\int_0^Tu_1(\cdot, t)e^{-\tau t}\mathrm{d}t+e^{-\tau T}(\partial_t u_1(\cdot,T)+\tau u_1(\cdot, T)) ~~\text{in}~H^{-1}(\Omega)
\]
with the estimate
\[\begin{split}
\|u_1(\cdot, T)\|_{L^2(\Omega)}+\|\partial_tu_1(\cdot,T)\|_{H^{-1}(\Omega)}\leq &C\|F\|_{L^2([0,T];L^2(\Omega))}\\
\leq & CT^{1/2}\|\psi\|_{H_{co}^{1/2}(\Sigma)}.
\end{split}\]

Based on this observation consider the finite-time Laplace transform $w(\cdot,\tau)$ of $u_1$:
\[w(\cdot,\tau)=\int_0^T u_1(\cdot, t)e^{-\tau t}\mathrm{d}t.\]
Then
\[( \rho \tau^2w(\cdot,\tau), v)+B[w(\cdot,\tau),v]=( F_1,v),\,\,v\in H_0^1(\Omega)\]
with
\[F_1:=\int_0^T Fe^{-\tau t}\mathrm{d}t-e^{-\tau T}(\partial_t u_1(\cdot,T)+\tau u_1(\cdot,T))\in H^{-1}(\Omega),\]
that is, $w$ satisfies the elliptic equation
\[\begin{cases}
\rho \tau^2w-\text{div}(\mathbf{C}\varepsilon(w))=F_1,\\
w=0~~\text{on}~\partial\Omega
\end{cases}\]
 in the weak sense.
%\[z=w+u_0\left(\int_0^T t^2e^{-\tau t}\mathrm{d}t\right)=w+u_0\chi_1(\tau;T)\]

Now let $v$ satisfy (\ref{transformed eq}) with $h=1/\tau$ and the Dirichlet data $\varphi$ taken as
\[\varphi=\psi \chi_1(\tau;T)\,\,\text{with}\,\, \chi_1(\tau;T)=\int_0^T t^2e^{-\tau t}\mathrm{d}t\]
Then we will estimate 
\[r(y,\tau)=v(y,\tau)-\int_0^Tu(y,t)e^{-\tau t}\mathrm{d}t.\]
By a direct computation, $\chi_1$ satisfies the estimate
\[\chi_1(\tau;T)\geq \frac{C}{\tau^3}\]
for some $C>0$ independent of $\tau$ and $T$.
Furthermore, $z=v-u_0\chi_1(\tau;T)$ satisfies
\[\begin{cases}
\rho \tau^2z-\text{div}(\mathbf{C}\varepsilon(z))=-\chi_1(\tau;T)\rho\tau^2 u_0,\\
z=0~~\text{on}~\partial\Omega.
\end{cases}\]
We observe that $r=z-w$ and that it
satisfies
\[\begin{cases}
\rho \tau^2r-\text{div}(\mathbf{C}\varepsilon(r))=e^{-\tau T}(\partial_t u_1(T)+\tau u_1(T))+(2Te^{-\tau T}+\tau T^2e^{-\tau T})u_0,\\
r=0~~\text{on}~\partial\Omega.
\end{cases}\]
Then we have
\[\|r(\cdot,\tau)\|_{H^1(\Omega)}\leq C\tau T^3e^{- \tau T}\|\psi\|_{H^{1/2}_{co}(\Sigma)}\]
with $C$ independent of $\tau$ and $T$ by the standard elliptic regularity estimate.

Now consider the finite-time Laplace transform $\mathcal{L}_T u$ of $u$ given as
\[\mathcal{L}_Tu=\int_0^Tue^{-\tau t}\mathrm{d}t,~\text{with}~\tau>0.\]
Then we have
\[\|\partial_L v-\mathcal{L}_T\partial_L u\|_{H^{-1/2}(\Sigma)}\leq C\tau T^3e^{- \tau T}\|\psi\|_{H_{co}^{1/2}(\Sigma)}\]
or, equivalently,
\[\|\Lambda^{h,\Sigma}\varphi-h\mathcal{L}_T\Lambda_T^\Sigma\chi\chi_1^{-1}\varphi\|_{H^{-1/2}(\Sigma)} \leq C\left(\frac{T}{h}\right)^3e^{-\frac{T}{h}}\|\varphi\|_{H_{co}^{1/2}(\Sigma)}.\]
Hence,
\begin{equation}\label{op_estimates}
\|\Lambda^{h,\Sigma}-h\mathcal{L}_T\Lambda_T^\Sigma\chi\chi_1^{-1}\|_{H^{1/2}_{co}(\Sigma)\rightarrow H^{-1/2}(\Sigma)}\leq   C\left(\frac{T}{h}\right)^3e^{-\frac{T}{h}},
\end{equation}
which means that for a fixed $T>0$, 
\[\Lambda^{h,\Sigma}\sim h\mathcal{L}_T\Lambda_T^\Sigma\chi\chi_1^{-1}\]
modulo an operator mapping $H^{1/2}_{co}(\Sigma)$ to $H^{-1/2}(\Sigma)$ with estimates $\mathcal{O}(h^\infty)$. Thus, from
$\mathcal{L}_T$, we can obtain the full symbol of $\Lambda^{h,\Sigma}$ viewed as a semiclassical pseudodifferential operator with a small parameter $h$.

We remark here that, in general, we could not have the full operator
$\Lambda^{h,\Sigma}$ from $\Lambda_T^\Sigma$ for a finite $T$, but we can get the full
symbol of $\Lambda^{h,\Sigma}$ from which we can already expect to recover the
material parameters at the boundary. Later, we will see in Section \ref{cont} that, we can get $\Lambda^{h,\Sigma}$ from
$\Lambda_{T^*}^\Sigma$ for some $T^*$ large enough, and the material parameters are
piecewise analytic. This
enables us to recover piecewise analytic densities and stiffness
tensors.

\section{Time continuation of the DN map}\label{cont}${}$\newline\indent
In this section, we show that we can obtain $\Lambda_T^\Sigma$ for any $T>0$
from $\Lambda_{T^*}^\Sigma$ for a fixed $T^*$ large enough, assuming that the
coefficients are piecewise analytic. We will follow \cite{KL}.

We assume that $\Omega$ consists of a finite number of Lipschitz
subdomains $D_\alpha$, $\alpha=1,\cdots,K$. That is,
$\overline{\Omega}=\cup_{i=1}^K\overline{D}_\alpha$, $D_\alpha\cap
D_\beta=\emptyset$ if $\alpha\neq \beta$.  We also assume that in each
$D_\alpha$, $\mathbf{C}$ and $\rho$ are analytic up to its boundary.  Since $\Omega$ is a
domain, we can assume without loss of generality that there exist
smooth nonempty
$\Sigma_{\alpha+1}\subset\overline{D_\alpha}\cap\overline{D_{\alpha+1}}$,
$\alpha=1,\cdots,K$ with $\Sigma=\Sigma_1\subset\partial\Omega$. First, we prove the following global version of the Holmgren-John uniqueness theorem.
\begin{lemma}\label{Holmgren}
There exists a finite $r>0$, such that, for any $t\geq 2r$, if $e\in\mathcal{D}'((0,t)\times\Omega)$ satisfies
\[
\begin{split}
\rho\partial^2_te=\operatorname{div} (\mathbf{C}\varepsilon(e))~~\text{in}~\Omega\times(0,t),\\
e\vert_{\Sigma\times[0,t]}=(\mathbf{C}\varepsilon(e))\nu \vert_{\Sigma\times[0,t]}=0,
\end{split}
\]
then $e(\frac{t}{2})=\partial_te(\frac{t}{2})=0$ in $\Omega$. 
\end{lemma}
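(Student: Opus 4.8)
The plan is to establish a global unique continuation result for the elastic system by combining the local Holmgren–John theorem with a finite-step propagation argument across the interfaces $\Sigma_{\alpha+1}$. The key structural feature I would exploit is that the coefficients $\mathbf{C}$ and $\rho$ are analytic up to the boundary within each subdomain $D_\alpha$, so the Holmgren–John uniqueness theorem applies \emph{inside each piece} even though the global coefficients are merely piecewise analytic. First I would fix the domain of dependence/influence structure: since the system is hyperbolic with finite wave speeds, there is a maximal finite speed $c_{\max}$ determined by the strong convexity constant $\delta$, the bound on $\mathbf{C}$, and the lower bound on $\rho$. I would define $r$ in terms of this speed and the geometry of the nested subdomains — roughly, $r$ should be large enough that a signal can travel from $\Sigma$ through all $K$ layers and back. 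The claim $e(t/2)=\partial_t e(t/2)=0$ is then a statement that vanishing Cauchy data on $\Sigma$ over a long enough time interval forces the solution to vanish at the mid-time throughout $\Omega$.

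The main steps I would carry out, in order, are as follows. (1) Start on the boundary layer $D_1$, which touches $\Sigma\subset\partial\Omega$. We are given that both $e$ and its conormal derivative $(\mathbf{C}\varepsilon(e))\nu$ vanish on $\Sigma\times[0,t]$. Since $\mathbf{C},\rho$ are analytic in $D_1$, the Holmgren–John theorem for the elastic wave equation gives that $e$ vanishes in a lens-shaped region of $D_1\times(0,t)$ determined by the domain of influence of $\Sigma$; in particular $e$ and $\partial_t e$ vanish on $\Sigma_2\times[\text{some subinterval}]$ and the full Cauchy data of $e$ on the next interface $\Sigma_2$ vanishes on a time interval shortened by the travel time across $D_1$. (2) Iterate: on $\Sigma_2$ we now have vanishing Dirichlet and conormal (Neumann) data for $e$ restricted to $D_2$, so we reapply Holmgren–John inside the analytic piece $D_2$. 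Proceeding inductively through $D_2,\dots,D_K$, each step consumes a fixed amount of the available time interval equal to twice the traversal time of the corresponding layer. After $K$ steps, provided $t\ge 2r$ with $r$ chosen as the sum of these traversal times (plus the constant needed so that the central time $t/2$ lies inside every region of vanishing), we conclude $e=0$ in a full neighborhood of $\{t/2\}\times\Omega$, whence $e(t/2)=\partial_t e(t/2)=0$.

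The delicate point in each iteration is the \emph{transmission of vanishing Cauchy data across an interface}. At $\Sigma_{\alpha+1}$ the coefficients jump, so $e$ only satisfies the weak (distributional) transmission conditions: continuity of the displacement $e$ and of the traction $(\mathbf{C}\varepsilon(e))\nu$ across the interface. Once $e$ is known to vanish identically on the $D_\alpha$ side up to the interface, these transmission conditions force \emph{both} $e|_{\Sigma_{\alpha+1}}=0$ and the traction from the $D_{\alpha+1}$ side to vanish, which is exactly the Cauchy data needed to restart Holmgren–John inside $D_{\alpha+1}$. I would make this rigorous by a trace argument, noting that $e\in\mathcal{D}'$ combined with the equation gives enough interior regularity near the interface (via $L^2$ energy estimates) for the traces to be well-defined.

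The step I expect to be the main obstacle is justifying the Holmgren–John theorem in its \emph{sharp geometric form} for the fully anisotropic elastic system with analytic coefficients — that is, controlling the precise shape and speed of the region of vanishing so that the travel times add up correctly and the mid-time $t/2$ is reached. Unlike the scalar case, the elastic system has multiple characteristic speeds (P- and S-type), and the domain of influence is governed by the slowest wave; I would need to verify that the analyticity of the coefficients yields unique continuation across \emph{all} characteristic surfaces via the Holmgren–John theorem, and that no characteristic cavity is left uncovered. A secondary technical obstacle is the low regularity ($e\in\mathcal{D}'$ only): I would first upgrade $e$ to finite energy locally via the equation and the given vanishing boundary data before invoking the classical statements. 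Once these two points are settled, the finite iteration over the $K$ analytic layers is routine and yields the explicit finite $r$.
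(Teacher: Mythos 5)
Your overall strategy --- iterate Holmgren--John through the chain of analytic subdomains, losing a fixed amount $\varrho_\alpha$ of the time interval at each layer and choosing $r$ so that $t/2>\sum_\alpha\varrho_\alpha$ --- is exactly the paper's, and your step (1) on $D_1$ coincides with theirs. Where you genuinely diverge is the mechanism for crossing an interface $\Sigma_{\alpha+1}$. You propose to extract vanishing Cauchy data (displacement and traction) on $\Sigma_{\alpha+1}$ from the transmission conditions and then restart Holmgren--John with the interface as the initial surface. The paper instead follows Oksanen's device: analytically continue $\mathbf{C}\vert_{D_{\alpha+1}},\rho\vert_{D_{\alpha+1}}$ to a small neighborhood $U_{\alpha+1}$ of $\Sigma_{\alpha+1}$ that overlaps the region where $e$ is already known to vanish; since $e\equiv 0$ on $U_{\alpha+1}\cap D_\alpha$, the distribution $e$ satisfies the equation with the \emph{extended analytic} coefficients on all of $\tilde D_{\alpha+1}=D_{\alpha+1}\cup U_{\alpha+1}$, and Holmgren then propagates the vanishing from that open set into $D_{\alpha+1}$. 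The advantage of the paper's route is precisely that it bypasses the point you flag as delicate: for $e\in\mathcal{D}'$ and discontinuous coefficients, the one-sided traces of $e$ and of the traction on $\Sigma_{\alpha+1}$ are not defined a priori, and ``upgrading to finite energy via the equation'' is not automatic for a hyperbolic operator (there is no hypoellipticity to invoke); by contrast, checking that the zero-extension satisfies the weak form of the analytically extended equation requires no trace theorem at all. Your route can be made rigorous in the setting where the lemma is actually applied (in Lemma~\ref{dense} the function $e$ is a finite-energy solution of a backward problem, so the traces do exist), but as stated for general $e\in\mathcal{D}'$ the transmission-condition step is the one genuine gap you would need to close, and the analytic-extension trick is the cleaner way to close it. Finally, the sharp domain-of-influence analysis you anticipate as the main obstacle (multiple characteristic speeds, characteristic cavities) is not needed: the lemma asserts only the existence of \emph{some} finite $r$, and the paper simply takes each $\varrho_\alpha$ and then $r$ ``sufficiently large.''
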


\noindent
Here and in the remainder of this section we will suppress the space
coordinates in our notation.

\begin{proof}
First, we have by the standard Holmgren's theorem (cf. \cite{Treves})
that $e$ vanishes on $D_1\times [\varrho_1,t-\varrho_1]$ for some
$\varrho_1>0$, if $\frac{t}{2}>\varrho_1$. For the unique continuation
across the interfaces, we follow the reasoning in \cite{Oksanen} in the following argument. First,
we apply an analytic continuation of $\mathbf{C},\rho$ on $D_2$ to a
small neighborhood $U_2$ of $\Sigma_2$. Use
$\mathbf{C}_{D_2},\rho_{D_2}$ to denote the extended coefficients on
$\tilde{D}_2=D_2\cup U_2$. Now, $e$ satisfies
\[
\rho_{D_2}\partial^2_te=\operatorname{div} (\mathbf{C}_{D_2}\varepsilon(e))~~\text{in}~\tilde{D}_2\times(\varrho_1,t-\varrho_1).
\]
Since $e$ vanishes on $\tilde{D}_2\cap D_1$, we can apply Holmgren's theorem again to conclude that $e$ vanishes on $D_2\times (\varrho_1+\varrho_2,t-\varrho_1-\varrho_2)$, for some $\varrho_2>0$. We need $\frac{t}{2}>\varrho_1+\varrho_2$. We can repeat the process and prove the lemma provided that $r$ is sufficiently large.
\end{proof}
~\\

Let $u^f$ be the solution of (\ref{EQ no1}) with boundary value $f$.\\

\begin{lemma}\label{dense}
The pairs $(u^f(2r),-\partial_tu^f(2r))$, $f\in C_c^\infty(\Sigma\times(0,2r))$ are dense in $ H_0^1(\Omega)\times L^2(\Omega)$.
\end{lemma}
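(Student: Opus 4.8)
The plan is to prove density by duality, i.e.\ by showing that the annihilator of the reachable set $S=\{(u^f(2r),-\partial_t u^f(2r)):f\in C_c^\infty(\Sigma\times(0,2r))\}$ is trivial. It is convenient to work in the energy space $H_0^1(\Omega)\times L^2(\Omega)$ equipped with the energy inner product built from $\int_\Omega\mathbf{C}\varepsilon(a)::\varepsilon(b)\,dy$ on the $H_0^1$ factor and $\int_\Omega\rho\,a\cdot b\,dy$ on the $L^2$ factor, which is equivalent to the standard norm by strong convexity together with Korn's inequality and the bounds on $\rho$, and which makes this a Hilbert space. Note that since $f$ is supported away from $t=2r$, each $u^f(2r)$ indeed lies in $H_0^1(\Omega)$, so $S\subset H_0^1(\Omega)\times L^2(\Omega)$. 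I would then take $(g_0,g_1)$ orthogonal to every element of $S$ and aim to prove $(g_0,g_1)=0$. The device is to encode $(g_0,g_1)$ as the terminal Cauchy data of a time-reversed adjoint elastic field $e$.

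Concretely, I let $e$ solve the homogeneous Dirichlet problem $\rho\partial_t^2e=\operatorname{div}(\mathbf{C}\varepsilon(e))$ on $\Omega\times(0,2r)$ with $e|_{\partial\Omega\times(0,2r)}=0$, and with terminal data $(e(2r),\partial_t e(2r))$ chosen, through the $\rho$-weighted pairing and the elliptic operator $\operatorname{div}\mathbf{C}\varepsilon$, so that the terminal displacement carries $g_1$ and the terminal velocity carries $g_0$; since the elastic system is time-reversible this backward problem is as well posed as the forward one. Pairing the equations for $u^f$ and $e$ and integrating the identity
\[
\frac{d}{dt}\int_\Omega \rho\bigl(\partial_t u^f\cdot e-u^f\cdot\partial_t e\bigr)\,dy
=\int_{\partial\Omega}\bigl((\mathbf{C}\varepsilon(u^f))\nu\cdot e-(\mathbf{C}\varepsilon(e))\nu\cdot u^f\bigr)\,dS
\]
over $(0,2r)$, the interior terms cancel by the symmetry $C^{ijkl}=C^{klij}$, the vanishing initial data of $u^f$ kills the $t=0$ contribution, and $e|_{\partial\Omega}=0$ together with $u^f|_{\partial\Omega}=f$ (supported in $\Sigma$) collapses the boundary term, yielding
\[
\int_\Omega\rho\bigl(\partial_t u^f(2r)\cdot e(2r)-u^f(2r)\cdot\partial_t e(2r)\bigr)\,dy
=-\int_0^{2r}\!\!\int_\Sigma (\mathbf{C}\varepsilon(e))\nu\cdot f\,dS\,dt .
\]
By the choice of terminal data, and after one integration by parts in the first factor, the left-hand side is exactly the orthogonality pairing of $(u^f(2r),-\partial_t u^f(2r))$ with $(g_0,g_1)$, so the hypothesis forces the right-hand side to vanish for all admissible $f$, i.e.\ $(\mathbf{C}\varepsilon(e))\nu=0$ on $\Sigma\times(0,2r)$.

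At this stage $e$ solves the elastic system and has \emph{both} vanishing Dirichlet data and vanishing traction on $\Sigma\times(0,2r)$, so Lemma~\ref{Holmgren}, applied with $t=2r$, gives $e(r)=\partial_t e(r)=0$. Solving the equation forward from $t=r$ with zero Cauchy data and zero boundary data, well-posedness yields $e\equiv0$ on $\Omega\times(r,2r)$; in particular the terminal data at $t=2r$ vanish, hence $(g_0,g_1)=0$. Thus $S$ has trivial annihilator and is dense.

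The step I expect to be the main obstacle is the functional-analytic setup rather than the Holmgren continuation: because the density factor $\rho$ is only piecewise analytic (hence discontinuous across the interfaces), one must run the adjoint dynamics in the correct $\rho$-weighted energy spaces so that the terminal data dual to $(g_0,g_1)$ and the backward evolution are well defined. At the natural regularity the adjoint field $e$ sits only at the $C([0,2r];L^2)\cap C^1([0,2r];H^{-1})$ level, so its boundary traction $(\mathbf{C}\varepsilon(e))\nu$ is a priori a distribution and the Green's-formula integration by parts above is not literally justified; the key technical input is the hidden (trace) regularity for the elastic wave equation, which assigns a well-defined traction in a negative-order boundary space that can still be tested against the smooth $f$. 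It is convenient that Lemma~\ref{Holmgren} is already stated for distributional solutions, so that it applies verbatim at this regularity.
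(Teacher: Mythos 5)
Your argument is correct and is essentially the paper's own proof: the paper likewise tests the reachable set against a functional $(\alpha,\beta)\in H^{-1}(\Omega)\times L^2(\Omega)$ (your energy-inner-product orthogonality is just the Riesz-identified version of this), encodes it as terminal data $\rho e(2r)=\beta$, $\rho\partial_t e(2r)=\alpha$ of the backward adjoint elastic field, derives $\int_0^{2r}\langle(\mathbf{C}\varepsilon(e))\nu,f\rangle\,dt=0$ from the same Green's identity, and concludes via Lemma~\ref{Holmgren} that $e\equiv 0$. Your closing remarks on the hidden trace regularity of $(\mathbf{C}\varepsilon(e))\nu$ at the $C([0,2r];L^2)\cap C^1([0,2r];H^{-1})$ level flag a point the paper also passes over without comment.
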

\begin{proof}
Assume that a pair
\[
(\alpha,\beta)\in (H_0^1(\Omega)\times L^2(\Omega))'=H^{-1}(\Omega)\times L^2(\Omega)
\]
satisfies
\[\langle \alpha, u^f(2r)\rangle_{( H^{-1}(\Omega),H_0^1(\Omega) )}+\langle \beta,-\partial_tu^f(2r)\rangle_{L^2(\Omega)}=0\]
for all $f\in C_c^\infty(\Sigma\times(0,2r))$, where $\langle\,\cdot\,,\,\cdot\, \rangle_{(H^{-1}(\Omega), H_0^{1}(\Omega))}$ is a pairing between an element in $H^{-1}(\Omega)$ and an element in
$H^1(\Omega)$ defined as a continuous extension of the $L^2(\Omega)$ inner product. It is sufficient to show that
\[\alpha=\beta=0.\]
%Clearly,
%\[X_r=\{(u^f(2r),-\partial_tu^f(2r))\vert f\in C_0^\infty(\Sigma\times[0,2r])\}\]
%is a linear subspace of $L^2(\Omega)\times H_0^1(\Omega)\subset L^2(\Omega)\times L^2(\Omega)$. If $X_r$ is not dense in $ L^2(\Omega)\times H_0^1(\Omega)$ in $L^2(\Omega)\times L^2(\Omega)$ topology, then there exists $(\gamma,\vartheta)\in L^2(\Omega)\times L^2(\Omega)$ and $\epsilon>0$, such that 
%\[\begin{split}
%\|(\gamma,\vartheta)\|\geq \epsilon\\
%\langle \gamma, \tilde{\gamma}\rangle_{L^2}+\langle \vartheta,\tilde{\vartheta}\rangle_{L^2}=0
%\end{split}
%\]
%for every $(\tilde{\gamma},\tilde{\vartheta})\in X_r$. Since $H_0^1(\Omega)$ is dense in $L^2(\Omega)$, we can take $\gamma\in H_0^1(\Omega)$.
Let $e$ be the unique solution of
\begin{equation}\label{adj}
\begin{cases}
\rho\partial^2_te=\operatorname{div} (\mathbf{C}\varepsilon(e))~~\text{in}~\Omega\times(0,2r),\\
e=0~~\text{on}~\partial\Omega\times(0,2r) ,\\
 \rho e(y,2r)=\beta,~\rho\partial_te(y,2r)=\alpha~~\text{in}~\Omega
\end{cases}
\end{equation}
with 
\[e\in C([0,2r];L^2(\Omega)),~\partial_te\in C([0,2r];H^{-1}(\Omega)).\]
We note that the well-posedness of the above problem was established in \cite{LM}.

Upon integration by parts, we obtain
\[
\begin{split}
0=&\int_0^{2r}\big(\langle (\rho\partial_t^2 e-\operatorname{div} (\mathbf{C}\varepsilon(e)),u^f\rangle_{(H^{-1}(\Omega),H_0^1(\Omega))}\\
&\quad\quad\quad\quad\quad\quad\quad\quad-\langle(\rho\partial_t^2 u^f-\operatorname{div} (\mathbf{C}\varepsilon(u^f))),e\rangle_{(H^{-1}(\Omega),H_0^1(\Omega))}\big)\mathrm{d}t\\
=&\langle\beta, \partial_tu^f(2r)\rangle_{L^2(\Omega)}- \langle \alpha,u^f(2r)\rangle_{(H^{-1}(\Omega),H_0^1(\Omega))}\\
&\quad\quad\quad\quad\quad\quad\quad\quad-\int_0^{2r}\langle(\mathbf{C}\varepsilon(e))\nu,f\rangle_{(H^{-1/2}(\Sigma),H^{1/2}(\Sigma))}\mathrm{d}t\\
=&-\int_0^{2r}\langle(\mathbf{C}\varepsilon(e))\nu,f\rangle_{(H^{-1/2}(\Sigma),H^{1/2}(\Sigma))}\mathrm{d}t
\end{split}
\]
for any $f\in C_c^\infty(\Sigma\times(0,2r))$, where $\langle\,\cdot\,,\,\cdot\, \rangle_{(H^{-1/2}(\Omega), H^{1/2}(\Omega))}$ is defined likewise\\ $\langle\,\cdot\,,\,\cdot\, \rangle_{(H^{-1}(\Omega), H_0^{1}(\Omega))}$. Hence we have
\[\int_0^{2r}\langle(\mathbf{C}\varepsilon(e))\nu,f\rangle_{(H^{-1/2}(\Sigma),H^{1/2}(\Sigma))}\mathrm{d}t=0.\]
This yields
\[e\vert_{\Sigma\times[0,2r]}=(\mathbf{C}\varepsilon(e))\nu \vert_{\Sigma\times[0,2r]}=0.\]
By Lemma \ref{Holmgren}, we have
\[e(r)=\partial_te(r)=0,~~\text{on}~\Omega.\]
Thus $e=0$ on $\Omega\times [0,2r]$ and, hence, $\alpha=\beta=0$.
\end{proof}
~\\

We consider a bilinear form
\[E(u^f,u^g,t)=\int_\Omega \rho\partial_tu^f(t)\partial_tu^g(t)+\mathbf{C}\varepsilon(u^f(t))::\varepsilon(u^g(t))\mathrm{d}y.\]
To simplify the notation, we write $E(u^f,t)=E(u^f,u^f,t)$.\\

\begin{lemma}\label{deter}
The operator $\Lambda_t^\Sigma$ determines $E(u^f,u^g,t)$ for $f,g\in C_c^\infty(\Sigma\times(0,t))$.
\end{lemma}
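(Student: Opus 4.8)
The plan is to prove the lemma by deriving a Blagoveshchenskii-type energy identity that expresses $E(u^f,u^g,t)$ entirely in terms of the boundary data $f,g$ and their images under $\Lambda^\Sigma_t$, with no residual dependence on the interior fields. The mechanism is to differentiate the bilinear energy in the time variable and use the equations of motion to trade the interior integrals for an integral over the boundary, where the traction is exactly what $\Lambda^\Sigma_t$ provides.

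First I would compute $\frac{d}{ds}E(u^f,u^g,s)$. Differentiating under the integral produces two kinetic terms $\int_\Omega \rho\,(\partial_s^2 u^f\cdot\partial_s u^g+\partial_s u^f\cdot\partial_s^2 u^g)\,dy$ together with two elastic terms $\int_\Omega \mathbf{C}\varepsilon(\partial_s u^f)::\varepsilon(u^g)+\mathbf{C}\varepsilon(u^f)::\varepsilon(\partial_s u^g)\,dy$. Into the kinetic terms I substitute $\rho\partial_s^2 u^f=\operatorname{div}(\mathbf{C}\varepsilon(u^f))$ (and likewise for $u^g$) and integrate by parts in $y$. Because the minor symmetry $C^{ijkl}=C^{jikl}$ makes $\mathbf{C}\varepsilon(u^f)$ a symmetric matrix, each integration by parts yields a boundary term $\int_{\partial\Omega}(\mathbf{C}\varepsilon(u^f)\nu)\cdot\partial_s u^g\,dS$ plus an interior quadratic term $-\int_\Omega \mathbf{C}\varepsilon(u^f)::\varepsilon(\partial_s u^g)\,dy$. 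The major symmetry $C^{ijkl}=C^{klij}$ then identifies these interior terms with the elastic terms coming from $\frac{d}{ds}E$ and cancels them exactly, leaving the purely boundary expression
\begin{equation*}
\frac{d}{ds}E(u^f,u^g,s)=\int_{\partial\Omega}\big((\mathbf{C}\varepsilon(u^f)\nu)\cdot\partial_s u^g+(\mathbf{C}\varepsilon(u^g)\nu)\cdot\partial_s u^f\big)\,dS.
\end{equation*}

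Next I would localize and integrate in time. Since $f$ and $g$ are supported in $\Sigma$, the boundary fields $u^f,u^g$ vanish on $\partial\Omega\setminus\Sigma$, so the surface integral reduces to one over $\Sigma$, where $\mathbf{C}\varepsilon(u^f)\nu=\Lambda^\Sigma_t f$ and $\partial_s u^g=\partial_s g$ are both read off the data. Integrating from $0$ to $t$ and using the homogeneous initial conditions $u(\cdot,0)=\partial_t u(\cdot,0)=0$, which force $E(u^f,u^g,0)=0$, gives
\begin{equation*}
E(u^f,u^g,t)=\int_0^t\big(\langle \Lambda^\Sigma_t f,\partial_s g\rangle_\Sigma+\langle \Lambda^\Sigma_t g,\partial_s f\rangle_\Sigma\big)\,ds,
\end{equation*}
where $\langle\,\cdot\,,\cdot\,\rangle_\Sigma$ denotes the $H^{-1/2}(\Sigma)$–$H^{1/2}(\Sigma)$ pairing evaluated at time $s$. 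The right-hand side involves only $\Lambda^\Sigma_t$, $f$, and $g$, which establishes the claim.

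The main obstacle is not the formal algebra but its justification at the available regularity. For $f,g\in C_c^\infty(\Sigma\times(0,t))$ the solutions lie in the energy class, so the traction $\mathbf{C}\varepsilon(u^f)\nu$ is only an $H^{-1/2}(\Sigma)$ object; consequently every boundary term above must be interpreted as a duality pairing rather than a genuine surface integral. I would therefore carry out the integration by parts within the weak formulation \eqref{weak form} and invoke the hidden-regularity and trace estimates for the elastic system (cf.\ \cite{LM}) to guarantee that $\langle \Lambda^\Sigma_t f,\partial_s g\rangle_\Sigma$ is well defined and that the time integration is legitimate. A secondary point is to verify that differentiating $E$ under the integral sign is permissible, which again follows from the temporal regularity of $u^f$ recorded in Section~\ref{semi}.
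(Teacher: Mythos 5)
Your proof is correct and follows essentially the same route as the paper: differentiate the energy in time, use the equation and integration by parts (with the symmetries of $\mathbf{C}$) to reduce everything to a boundary pairing of the traction $\Lambda_t^\Sigma f$ against $\partial_s g$, then integrate from the vanishing initial data. The only cosmetic difference is that you work directly with the bilinear form $E(u^f,u^g,s)$, whereas the paper computes $\partial_t E(u^f,t)=2\langle(\Lambda_t^\Sigma f)(t),\partial_t f(t)\rangle$ for the quadratic form and recovers the bilinear version by polarization at the end.
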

\begin{proof}
By the estimates for $\partial_tu^f$,$\partial_t^2u^f$, we have 
\[u^f\in C^1([0,t];H^1_0(\Omega))\cap C^2([0,t];L^2(\Omega)). \]
Integrating by parts, we find that
\[
\begin{split}
\partial_t E(u^f,t)
=& 2\int_\Omega \rho\partial_t^2u^f(t)\partial_tu^f(t)+\mathbf{C}\varepsilon(u^f(t))::\varepsilon(\partial_tu^f(t))\mathrm{d}y\\
=&2\langle\mathbf{C}\varepsilon(u^f(t))\nu,\partial_tu^f(t)\rangle_{(H^{-1/2}(\Sigma),H^{1/2}_{co}(\Sigma))}\\
=&2\langle (\Lambda_t^\Sigma f)(t),\partial_tf(t)\rangle_{(H^{-1/2}(\Sigma),H^{1/2}_{co}(\Sigma))}.
\end{split}
\]
With the initial conditions, $E(u^f,0)=0$, and we can determine $E(u^f,t)$ as well as
\[
E(u^f,u^g,t)=\frac{1}{4}(E(u^{f+g},t)-E(u^{f-g},t))
\]
by polarization.
\end{proof}
~\\

We arrive at\\

\begin{theorem}
Let $T^*>2r$, then $\Lambda_{T^*}^\Sigma$ determines $\Lambda_T^\Sigma$ for any $T>0$.
\end{theorem}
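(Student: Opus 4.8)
The plan is to reconstruct the full operator $\Lambda_T^\Sigma$ for an arbitrary $T>0$ from the single known operator $\Lambda_{T^*}^\Sigma$ by exploiting the energy bilinear form $E(u^f,u^g,t)$ together with the density statement of Lemma~\ref{dense}. The essential observation is that the energy at the control time, $E(u^f,u^g,2r)$, defines an inner product (the \emph{energy inner product}) on the space of reachable states $(u^f(2r),-\partial_t u^f(2r))$, and by Lemma~\ref{dense} these states are dense in $H_0^1(\Omega)\times L^2(\Omega)$. Since Lemma~\ref{deter} shows that $\Lambda_{T^*}^\Sigma$ (hence $\Lambda_{t}^\Sigma$ for every $t\le T^*$, in particular $t=2r<T^*$) determines $E(u^f,u^g,2r)$, we know the energy inner product on a dense subset. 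This is precisely the data the boundary control method converts into knowledge of wave solutions.

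The key steps, in order, are as follows. First I would fix any target time $T>0$ and a boundary source $g\in C_c^\infty(\Sigma\times(0,T))$ whose generated solution $u^g(T)$ we wish to probe. Because $T^*>2r$ we may freely use $\Lambda_t^\Sigma$ for all $t\in(0,T^*]$, and by time-translation of sources supported in subintervals we likewise access inner products of states evolved to the common control time $2r$. Second, using the density of $\{(u^f(2r),-\partial_t u^f(2r))\}$ and the known energy inner product, I would set up a sequence (or Cauchy net) of controls $f_n\in C_c^\infty(\Sigma\times(0,2r))$ whose final states converge in $H_0^1(\Omega)\times L^2(\Omega)$ to any prescribed state; solvability of the associated ``control problem'' is read off from Lemma~\ref{dense}, and the convergence is monitored purely through $E$, which we know. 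Third, I would express the quantity determining $\Lambda_T^\Sigma g$, namely the boundary traction $\mathbf{C}\varepsilon(u^g(T))\nu$ paired against test functions, in terms of these inner products: the pairing $\langle \Lambda_T^\Sigma g, \phi\rangle$ can be rewritten, via integration by parts exactly as in the proof of Lemma~\ref{deter}, as an energy-type expression evaluated at the shared time, hence as a limit of known quantities $E(u^{f_n},u^{g},2r)$ adjusted by the explicitly computable source terms.

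The main obstacle I expect is the bookkeeping that aligns the various time intervals and the domain-of-dependence constraints so that all states are legitimately compared at the single time $2r$ while the relevant inner products remain inside the range where $\Lambda_{T^*}^\Sigma$ supplies the data; in particular one must ensure the controls $f_n$ stay supported in $(0,2r)$ and that the finite speed of propagation together with the approximate controllability time $r$ makes the reachable set genuinely dense rather than merely dense in a subspace. A secondary technical point is handling the weak regularity: the states live only in $H_0^1\times L^2$ and the tractions in $H^{-1/2}(\Sigma)$, so the limiting arguments must be carried out in these dual pairings, using the continuity of the energy form and of the trace maps, rather than pointwise. Once the reachable set is shown dense and the energy inner product known on it, recovering $\Lambda_T^\Sigma$ for every $T$ is a matter of assembling these pieces; I would therefore invoke Lemmas~\ref{Holmgren}, \ref{dense}, and~\ref{deter} as the three pillars and present the reconstruction as the standard boundary-control continuation argument of Kurylev and Lassas, adapted to the piecewise-analytic elastic setting.
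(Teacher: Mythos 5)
You have identified the right three pillars (Lemmas~\ref{Holmgren}, \ref{dense}, \ref{deter}) and the correct boundary-control framing, but the step that is supposed to produce genuinely new data --- the traction $\mathbf{C}\varepsilon(u^g)\nu$ at times $t>T^*$ --- does not work as you describe it. You propose to rewrite $\langle \Lambda_T^\Sigma g,\phi\rangle$ as a limit of the energies $E(u^{f_n},u^g,2r)$ evaluated at the single time $2r$. But the identity behind Lemma~\ref{deter} is $\partial_tE(u^f,u^g,t)=\langle \mathbf{C}\varepsilon(u^f(t))\nu,\partial_tg(t)\rangle+\langle\mathbf{C}\varepsilon(u^g(t))\nu,\partial_tf(t)\rangle$: to extract the traction of $u^g$ at a time $t$ near $T$ you would need the energy as a function of $t$ near $T$ with a test source active there, and computing that via Lemma~\ref{deter} requires $\Lambda_t^\Sigma$ for $t$ near $T$ --- precisely what you are trying to determine, so the argument is circular. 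Knowing $E(\cdot\,,u^g,2r)$ against a dense reachable set only pins down the \emph{state} $(u^g(2r),\partial_tu^g(2r))$; it cannot encode the Neumann trace of $u^g$ on $(T^*,T]$, since $E(u^{f},u^g,\cdot)$ is constant in time once both sources have switched off.

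The missing ingredients, which are the heart of the paper's proof, are the following. (i) With $\delta=(T^*-2r)/2$, decompose an arbitrary source as $f=g+h$ with $h$ supported in $(\delta,T^*+\delta]$, which is handled trivially by time-translation invariance of the equation, and $g$ supported in $[0,2\delta)$. (ii) Use Lemmas~\ref{dense} and~\ref{deter} to construct, from the data alone, controls $g_n\in C_c^\infty(\Sigma\times(0,2r))$ whose states at time $2r$ approximate the state of $u^g$ at the \emph{shifted} time $t_0=2r+\delta$ (monitoring $E(u^{\tilde g_n},t_0)\to0$), not at the control time $2r$ itself. (iii) Observe that $u^g(\cdot+\delta)$ and $u^{g_n}(\cdot)$ then solve the same homogeneous-Dirichlet initial boundary value problem on $[2r,T^*]$ with nearby initial data, so by continuous dependence of the solution and of its boundary traction on the initial data, $(\Lambda^\Sigma_{T^*+\delta}g)(t)=\lim_{n}(\Lambda^\Sigma_{T^*}g_n)(t-\delta)$ on the new window $(T^*,T^*+\delta]$; iterating in steps of $\delta$ reaches any $T$. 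Your proposal targets the state at $2r$ rather than at $t_0$, omits the source decomposition, the time shift and the iteration, and replaces the continuous-dependence step by an energy identity that cannot see past $T^*$; as written it therefore never extends the DN map beyond $T^*$.
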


\begin{proof}
Let
$\delta =\frac{T^*-2r}{2}$. It is sufficient to show $\Lambda_{T^*}^\Sigma$
determines $\Lambda_{T^*+\delta}^\Sigma$. Indeed, by repeating the process
presented below, we obtain the result.

For any $f\in C^\infty([0,T^*+\delta],H^{1/2}_{co}(\Sigma))$, take a decomposition $f=g+h$, where $g\in C_c^\infty([0,2\delta),H^{1/2}_{co}(\Sigma))$ and $h\in C_c^\infty((\delta,T^*+\delta],H^{1/2}_{co}(\Sigma))$. Since we have $(\Lambda_{T^*+\delta}^\Sigma h)(t)=(\Lambda_{T^*}^\Sigma Y_{-\delta}h)(t)$ with $Y_{-\delta}h(t):=h(t+\delta)$ for $t\in [0,T^*]$ and
\[
\begin{split}
\Lambda^\Sigma_{T^*+\delta} f=\Lambda^\Sigma_{T^*+\delta}g+\Lambda^\Sigma_{T^*+\delta}h,
\end{split}
\]
we only need to show that $\Lambda_{T^*}^\Sigma$ determines $(\Lambda^\Sigma_{T^*+\delta}g)(t)$ for any $t\in(T^*,T^*+\delta]$.

Let $t_0=2r+\delta$. By Lemma \ref{dense}, there are $g_n\in C_c^\infty(\Sigma\times (0,2r))$ such that
\begin{equation}\label{et0}
\lim_{n\rightarrow\infty}(u^{g_n}(2r),\partial_tu^{g_n}(2r))=(u^{g}(t_0),\partial_tu^g(t_0))
\end{equation}
in the $H^1_0(\Omega)\times L^2(\Omega)$ topology. It is straightforward to show that $(\ref{et0})$ is equivalent to
\begin{eqnarray}
\lim_{n\rightarrow\infty}E(u^{\tilde{g}_n},t_0)=0.\label{et1}
%&\lim_{n\rightarrow\infty}\|\Lambda_{T^*}g_n'\|_{L^2(\Sigma\times[t_0,T^*])}=0,\label{et2}\\
%&\lim_{n\rightarrow\infty}E(u^{g_n'},u^{g'},t_0)=0,~~~\text{for every}~g'\in C^\infty_0(\Sigma\times [0,2r]).\label{et3}
\end{eqnarray}
Here, $\tilde{g}_n(t)=g(t)-g_n(t-\delta)$ with $g_n(s)=0,-\delta<s<0$. 
%By well-posedness of the the forward problem, it is easy to see $(\ref{et0})$ implies $(\ref{et1})$-$(\ref{et3})$. Now, we assume $(\ref{et1})$-$(\ref{et3})$ are valid. Consider the generalized eigenvalue problem
%\begin{eqnarray*}
%&-\text{div}(\mathbf{C}\varepsilon(u_j))=\lambda_j\rho u_j,~~\text{in}~\Omega,\\
%&u_j=0~~\text{on}~\partial\Omega.
%\end{eqnarray*}
%Let $\{(\lambda_j,u_j)\}_{j=1}^\infty$ be the generalized eigenvalues and normalized eigenfunctions of above problem in $L^2(\Omega)$.
%
%Write the eigenfunction expansion
%\[\begin{split}u^{g_n'}=\sum_{j=1}^\infty a_j^nu_j,\\
%u^{g'}=\sum_{j=1}^\infty b_ju_j.\end{split}\]
%By $(\ref{et1})$,
%\[\lim_{n\rightarrow\infty}\left(\sum_{j=1}^\infty\lambda_j(a_j^n)^2+\|\partial_tu^{g_n'}(t_0)\|_{L^2}^2\right)=0.\]
By Lemma \ref{deter}, we can construct $g_n$ using only $\Lambda_{T^*}^\Sigma$ to construct $g_n$ satisfying $(\ref{et0})$. The functions $y_n(t):=u^{g_n}(t)$ for $t\in[2r,T^*]$ are the solutions of the initial boundary value problem,
\[
\begin{split}
&\rho\partial_t^2y_n=\operatorname{div} (\mathbf{C}\varepsilon(y_n)~~\text{in}~\Omega\times[2r,T^*],\\
&y_n\vert_{\partial\Omega\times[2r,T^*]}=0,~y_n(2r)=u^{g_n}(2r),~~\partial_ty_n(2r)=\partial_tu^{g_n}(2r).
\end{split}
\]
We note that $y(t):=u^g(t+\delta)$ satisfies the same equation with initial data
\[y(2r)=u^{g}(t_0),~~\partial_ty(2r)=\partial_tu^{g}(t_0).\]
Also by the continuous dependence of solutions on initial data, we have
\[\lim_{n\rightarrow\infty}\mathbf{C}\varepsilon(y_n)\nu|_{\Sigma\times[2r,T^*]}=\mathbf{C}\varepsilon(y)\nu|_{\Sigma\times[2r,T^*]}\]
in the $L^2$ topology. Hence,
\begin{equation}
\begin{array}{lll}
(\Lambda_{T^*+\delta}^\Sigma g)(t)=C\epsilon(u^g(t))\nu\vert_{\Sigma\times[t_0,T^*+\delta]}=\mathbf{C}\varepsilon(y(t-\delta))\nu\vert_{\Sigma\times[2r,T^*]}\\
\qquad\qquad=\lim_{n\rightarrow\infty}\mathbf{C}\varepsilon(y_n(t-\delta))\nu\vert_{\Sigma\times[2r,T^*]}=\lim_{n\rightarrow\infty}(\Lambda_{T^*}^\Sigma Y_\delta\, y_n)(t)\\
\qquad\qquad=\big(Y_{-\delta}(\lim_{n\rightarrow\infty}\Lambda_{T^*}^\Sigma g_n)\big)(t)~~\text{for}~t\in[t_0,T^*+\delta]
\end{array}
\end{equation}
and we can determine $(\Lambda^\Sigma_{T^*+\delta}g)(t)$ on
$[T^*,T^*+\delta]$ from $\Lambda^\Sigma_{T^*}g_n$.
\end{proof}

\section{The principal symbol of $\Lambda^{h,\Sigma}$}\label{symbol}${}$\newline\indent
We now analyze the principal symbol of $\Lambda^{h,\Sigma}$ as a semiclassical pseudodifferential operator. All the calculations of semiclassical pseudodifferential symbols can be found
in \cite{dHNZ}. We will sketch the key points in the following.

For the analysis, we need to introduce the boundary normal coordinates.
Given a boundary
point $p_0\in\Sigma$, let $(x^1(p'),x^2(p'))$ be local
coordinates of $\Sigma$ close to $p_0$. For any $p$ near
$p_0$, we use the boundary normal coordinates
$x(p)=(x^1(p'),x^2(p'),x^3)$, where $p'$ is the nearest point on
$\Sigma$ to $p$ with $x(p')=(x^1(p),x^2(p),0)$ and
$x^3=\text{dist}(p,p')$. Here the distance function $\text{dist}(\cdot,\cdot)$ is respect to the Euclidean metric. Thus, $\Sigma$ is locally represented
by $x^3=0$. We let $(\xi_1,\xi_2,\xi_3)$ and $(\eta_1,\eta_2,\eta_3)$
represent the same conormal vector with respect to different
coordinates, $(x^1,x^2,x^3)$ and $(y^1,y^2,y^3)$, such that
$\xi_\alpha \mathrm{d}x^\alpha=\eta_i \mathrm{d}y^i$ using the Einstein summation convention, which will be repeatedly used in the paper. Here $y$ denotes the Cartesian coordinates introduced before.
We introduce the coordinate mapping, $F$, as
\[F(y^1(p),y^2(p),y^3(p))=(x^1(p),x^2(p),x^3(p))\]
and  the Jacobian
\begin{equation}\label{Jacobian}
J^a_{~i}=\left(\frac{\partial x^a}{\partial y^i}\right).
\end{equation}
Then $J^a_{~i}\xi_a=\eta_i$ (or equivalently, $J^T\xi=\eta$), and
\[ \tilde{C}^{abcd}(p)=J^a_{~i}J^b_{~j}J^c_{~k}J^d_{~l}C^{ijkl}(p)
\quad\text{and}\quad G^{ab}=J^a_{~i}J^b_{~j}\delta^{ij}=:G^{-1}.\]
Here, $G=(G_{ab})$ is the induced Riemannian metric for boundary normal
coordinates, $x$. Also,
\[J^a_{~i}\tilde{v}_a=v_i.\]
In the boundary normal coordinates, (\ref{transformed eq}) attains the
form
\begin{equation}\label{eq.1 tensorial}
\begin{cases}
(\tilde{\mathcal{M}}\tilde{v})^a=\rho G^{ac}\tilde{v}_c-h^2\displaystyle\sum_{b,c,d=1}^3\nabla_b (\tilde{C}^{abcd}\varepsilon_{cd}(\tilde{v}))=0~\text{in}~\{x^3>0\}~\text{for}~1\leq a\leq 3,\\
\tilde{v}_d|_{x^3=0}=\tilde{\psi}_d,~~1\leq d\leq 3,
\end{cases}
\end{equation}
where $\nabla_a$ is the covariant derivative with respect to metric $G$ and  $\varepsilon_{cd}(\tilde{v})=\frac{1}{2}(\nabla_c \tilde{v}_d+\nabla_d \tilde{v}_c)$.

We express $\Lambda^{h,\Sigma}$ in boundary normal coordinates as
\[\Lambda^{h,\Sigma}:\tilde{\psi}_d\rightarrow h\tilde{C}^{a3cd}\varepsilon_{cd}(\tilde{v})\vert_{\Sigma}.\]
Here we denote $\xi=(\xi',\xi_3)=(\xi_1,\xi_2,\xi_3)$. Then $\Lambda^{h,\Sigma}$ is a semiclassical pseudodifferential operator with full symbol $\tilde{\sigma}(\Lambda^{h,\Sigma})(x',\xi')$ which has the asymptotics \cite{dHNZ}
\[\tilde{\sigma}(\Lambda^{h,\Sigma})(x',\xi')=\sum_{j\geq 0}h^{j}\lambda_{-j}(x',\xi').\]
In this expansion, $\lambda_0(x',\xi')$ signifies the principal symbol of $\Lambda^{h,\Sigma}$. We proceed with calculating $\lambda_0(x',\xi')$.

We define
\begin{equation}\label{QRD}
\begin{split}
\tilde{Q}(x,\xi') &= \left(\sum_{b,d=1}^2\tilde{C}^{abcd}(x)\xi_b\xi_d;~1\le a,\, c\,\le 3 \right) ,\\
\tilde{R}(x,\xi') &= \left(\sum_{b=1}^2\tilde{C}^{abc3}(x)\xi_b;~1\le a,\, c\,\le 3\right) ,\\[0.25cm]
\tilde{D}(x) &=\left(\tilde{C}^{a3c3}(x);~1\le a,\, c\,\le 3 \right)
\end{split}
\end{equation}
and then
\begin{equation}\label{mathcal M}
\tilde{M}(x,\xi)=\tilde{D}(x)\xi_3^2+(\tilde{R}(x,\xi')+\tilde{R}^T(x,\xi'))\xi_3+\tilde{Q}(x,\xi')+\rho(x)G
\end{equation}
is the principal symbol of $\tilde{\mathcal{M}}$. First, we introduce the following factorization of $\tilde{M}$.

We note that $\tilde{M}(x,\xi)$ is a positive definite matrix for
$x \in \overline{\Omega}$, $\xi \in \mathbb{R}^3 \backslash
0$.  Hence, for fixed $(x,\xi')$,
$\det \tilde{D}^{-1/2}\tilde{M}(x,\xi)\tilde{D}^{-1/2} = 0$
 in $\xi_3$
admits $3$ roots $\xi_3 = \zeta_j~(j=1,2,3)$ with positive imaginary
parts and $3$ roots $\overline{\zeta_j}~(j=1,2,3)$ with negative
imaginary parts. Thus,

\begin{lemma}[\cite{GLR}]\label{L1}
There is a unique factorization
\[\check{M}(x,\xi)=\tilde{D}(x)^{-1/2} \tilde{M}(x,\xi) \tilde{D}(x)^{-1/2}
=(\xi_3-\check{S}_0^*(x,\xi'))(\xi_3-\check{S}_0(x,\xi')),\]
with $\operatorname{Spec}(\check{S}_0(x,\xi')) \subset \mathbb{C}_+$, where $\operatorname{Spec}(\check{S}_0(x,\xi'))$ is the spectrum of $\check{S}_0(x,\xi')$.
In the above,
\[\check{S}_0(x,\xi'):=\left(\oint_\gamma\zeta \check{M}(x,\xi',\zeta)^{-1}\mathrm{d}\zeta\right)\left(\oint_\gamma \check{M}(x,\xi',\zeta)^{-1}\mathrm{d}\zeta\right)^{-1},\]
where $\gamma\subset\mathbb{C}_+:=\{\zeta\in\mathbb{C}:\mathrm{Im}\,{\zeta}:=\text{\rm imaginary part of $\zeta$}>0\}$ is a continuous curve enclosing all the roots $\zeta_j~(j=1,2,3)$
of $\text{\rm det}(\check{M}(x,\xi',\zeta))=0$ in $\zeta\in\mathbb{C}_+$.
\end{lemma}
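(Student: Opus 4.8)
The plan is to treat $\check M(x,\xi',\cdot)$, for each fixed $(x,\xi')$ with $\xi'\neq 0$, as a monic quadratic matrix polynomial in the single scalar variable $\xi_3$, namely $\check M=\xi_3^2 I+A_1(x,\xi')\xi_3+A_0(x,\xi')$ with leading coefficient $I$ (since $\tilde D^{-1/2}\tilde D\tilde D^{-1/2}=I$), and to obtain the factorization from the theory of monic right divisors. First I would record the algebraic meaning of the asserted identity: because $\xi_3$ is scalar, expanding $(\xi_3-\check S_0^{*})(\xi_3-\check S_0)=\xi_3^2I-(\check S_0+\check S_0^{*})\xi_3+\check S_0^{*}\check S_0$ shows that $\check S_0$ must be a right solvent, $\check S_0^{2}+A_1\check S_0+A_0=0$, whose spectrum is the set of latent roots it captures. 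Before constructing it I note that the symmetry $C^{ijkl}=C^{klij}$ makes $\tilde D$, $\tilde R+\tilde R^T$ and $\tilde Q+\rho G$ real symmetric, so $\check M(\zeta)^{*}=\check M(\bar\zeta)$ for $\zeta\in\mathbb C$; hence the latent roots occur in conjugate pairs, which reconfirms the stated splitting into three roots in $\mathbb C_+$ and their three conjugates in $\mathbb C_-$, none real by positive definiteness of $\tilde M$ on the real sphere.

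Second, I would build $\check S_0$ from the $\mathbb C_+$ spectral subspace of the companion linearization. With $T=\left(\begin{smallmatrix}0&I\\-A_0&-A_1\end{smallmatrix}\right)$, $X=(I\;0)$, $Y=(0\;I)^{T}$, one has the standard-triple resolvent identity $\check M(\zeta)^{-1}=X(\zeta I-T)^{-1}Y$, so the moments appearing in the lemma are $\oint_\gamma\zeta^k\check M(\zeta)^{-1}\,d\zeta=2\pi i\,X T^{k}PY$, where $P=\tfrac{1}{2\pi i}\oint_\gamma(\zeta I-T)^{-1}\,d\zeta$ is the Riesz projection of $T$ onto the invariant subspace $\mathcal M$ spanned by the eigenvalues inside $\gamma$, i.e. the $\mathbb C_+$ roots. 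The Gohberg--Lancaster--Rodman divisor theorem says $\mathcal M$ is a supporting subspace: $X|_{\mathcal M}\colon\mathcal M\to\mathbb C^3$ is an isomorphism and $PY\colon\mathbb C^3\to\mathcal M$ is invertible. Granting this, $\oint_\gamma\check M^{-1}=2\pi i\,X PY$ is invertible, and cancellation gives $\check S_0=(\oint_\gamma\zeta\check M^{-1})(\oint_\gamma\check M^{-1})^{-1}=X|_{\mathcal M}\,(T|_{\mathcal M})\,(X|_{\mathcal M})^{-1}$, so $\check S_0$ is similar to $T|_{\mathcal M}$, has spectrum exactly the three $\mathbb C_+$ roots, and is the right solvent yielding $\check M=(\xi_3-\hat S)(\xi_3-\check S_0)$ with quotient $\hat S=-A_1-\check S_0$ (indeed $\hat S\check S_0=(-A_1-\check S_0)\check S_0=A_0$ by the solvent relation).

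Third, uniqueness and the adjoint structure. Any monic right divisor $\xi_3-S$ with $\operatorname{Spec}(S)\subset\mathbb C_+$ corresponds to a $T$-invariant subspace on which $T$ has spectrum in $\mathbb C_+$; since the three $\mathbb C_+$ roots exhaust the eigenvalues available there, this subspace must be the Riesz subspace $\mathcal M$, so $S=\check S_0$ and the right divisor with spectrum in $\mathbb C_+$ is unique. To identify the quotient, take Hermitian adjoints in $\check M(\zeta)=(\zeta-\hat S)(\zeta-\check S_0)$ and use $\check M(\zeta)^{*}=\check M(\bar\zeta)$, which gives $\check M(w)=(w-\check S_0^{*})(w-\hat S^{*})$; here $\operatorname{Spec}(\hat S^{*})=\overline{\operatorname{Spec}(\hat S)}\subset\mathbb C_+$, because $\hat S$ carries the complementary $\mathbb C_-$ roots, so by the uniqueness just shown $\hat S^{*}=\check S_0$, i.e. $\hat S=\check S_0^{*}$ is genuinely the adjoint. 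This delivers the factorization in exactly the stated form.

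Finally, the main obstacle. The only non-formal input is the supporting-subspace property, i.e. the invertibility of $X|_{\mathcal M}$ and of $\oint_\gamma\check M^{-1}\,d\zeta$; this is precisely where the Gohberg--Lancaster--Rodman machinery is needed and is the crux of the argument, the remainder being bookkeeping with the resolvent identity and the Hermitian symmetry. I would either invoke \cite{GLR} for this step, or establish the invertibility directly: using the $3+3$ root separation and the absence of real roots, one exhibits a basis of $\mathbb C^3$-valued decaying solutions $e^{i\zeta_j x^3}v_j$ of the associated ordinary differential system $\check M(x,\xi',D_3)\check v=0$ whose Cauchy data at $x^3=0$ span $\mathbb C^3$, which is exactly the analytic content of $X|_{\mathcal M}$ being an isomorphism.
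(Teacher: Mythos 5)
Your argument is correct, and it coincides with the paper's treatment of this lemma: the paper offers no proof of its own but simply cites Gohberg--Lancaster--Rodman, and what you have written is a faithful reconstruction of exactly that supporting-subspace/companion-linearization argument (including correctly isolating the invertibility of $X|_{\mathcal{M}}$, equivalently of $\oint_\gamma \check{M}^{-1}\mathrm{d}\zeta$, as the one non-formal input, for which your alternative direct argument via decaying solutions and strong convexity is also valid).
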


This lemma implies that the following factorization of $\tilde{M}(x,\xi)$,
\begin{equation}\label{fac M}
\tilde{M}(x,\xi)=(\xi_3-\tilde{S}_0^*(x,\xi'))\tilde{D}(x)(\xi_3-\tilde{S}_0(x,\xi')),
\end{equation}
where
\[\tilde{S}_0(x,\xi') = \tilde{D}^{-1/2}(x)\check{S}_0(x,\xi')\tilde{D}^{1/2}(x),\]
\begin{equation}
\lambda_0(x',\xi') = -{\rm i}(\tilde{D}(x',0)\tilde{S}_0(x',0,\xi')+\tilde{R}(x',0,\xi'))
\end{equation}
by the definition of $\Lambda^{h,\Sigma}$. 

By \cite[Proposition~3.4]{dHNZ}, we obtain $D^\alpha_{x^3} \lambda_0$
for any $\alpha=1,2,\cdots$ from the lower order terms $\lambda_{-j}$,
$j=1,2,\cdots$ of the full symbol of $\Lambda^{h,\Sigma}$. In order to give an
explicit reconstruction of the material parameters at the boundary, we
need to calculate the closed form of $\lambda_0(x',\xi')$.

\subsection*{Surface impedance tensor}\label{surface}${}$\indent

We need to have the explicit closed form of the principal symbol $\lambda_0$. The calculation in boundary normal coordinates would be extremely unclear. In this part, we
establish the relation between the principle symbol $\lambda_0$, which is defined in boundary normal coordinates $x$, and
the so-called surface impedance tensor $Z$, which is defined in Cartesian coordinates $y$. A similar discussion can be found in Section 4 of \cite{dHNZ}. 

Take $\mathbf{n}$ to be
the outer normal direction at $p\in\Sigma$ expressed in Cartesian coordinate. Denote
$\mathbf{n}=(n_1,n_2,n_3)$. Let
$\mathbf{m}=(m_1,m_2,m_3)$ be a vector (not a unit one) normal to
$\mathbf{n}$. We have
\begin{equation}\label{relation}
J^{-T}\mathbf{n}(p)=(0,0,1),\,\, J^{-T}\mathbf{m}(p)=(\xi'(p),0)=(\xi_1(p),\xi_2(p),0)\,
\end{equation}
with the Jacobian $J=(J_i^a)$, defined in (\ref{Jacobian}), at $p$. 

The operator $\mathcal{M}$ in (\ref{transformed eq}) has principal symbol $M=(M^{ik}(p,\eta))$ at $p$ given by
\[M^{ik}(p,\eta)=\sum_{j,l=1}^3 C^{ijkl}(p)\eta_j\eta_l+\rho\delta^{ik}\]
in $y$ coordinates, and operator $\tilde{\mathcal{M}}$ in (\ref{eq.1 tensorial}) has principal symbol $\tilde{M}$
\[\tilde{M}^{ac}(p,\xi)=\sum_{b,d=1}^3 \tilde{C}^{abcd}(p)\xi_b\xi_d+\rho G^{ac},\]
in $x$ coordinates. Using the transformation rules of tensors, we have
\[J^a_{~i} M^{ik} (p,\eta) J^c_{~k}=J^a_{~i}J^b_{~j}J^c_{~k}J^d_{~l}C^{ijkl}(p)\xi_b\xi_d+J^a_{~i}J^c_{~k}\rho\delta^{ik},\]
which is nothing but 
\[JMJ^T=\tilde{M}.\]

We choose $\eta=q\mathbf{n}+\mathbf{m}=(qn_1+m_1,qn_2+m_2,qn_3+m_3)$ so that $\xi=J^{-T}(q\mathbf{n}+\mathbf{m})=(\xi_1,\xi_2,q)$. It follows that
\[J^{-1}\tilde{M}(p,\xi)J^{-T}=M(p,q\mathbf{n}+\mathbf{m}).\]
We obtain
\[M(p,q\mathbf{n}+\mathbf{m})=Dq^2+\left(R+R^T\right)q+Q+\rho\]
with
\begin{equation}
% \left\{
\begin{array}{rcl}
   D(\mathbf{n}) &=& \left(\displaystyle \sum_{j,l=1}^3
       C^{ijkl} n_j n_l; 1\le i,\, k\,\le 3\right) ,
\\
   R(\mathbf{n},\mathbf{m}) &=& \left(\displaystyle \sum_{j,l=1}^3
       C^{ijkl} m_j n_l; 1\le i,\, k\,\le 3\right) ,
\\
   Q (\mathbf{m}) &=& \left(\displaystyle \sum_{j,l=1}^3
       C^{ijkl} m_j m_l; 1\le i,\, k\,\le 3\right) .
\end{array}
% \right.
\end{equation}

Similar to Lemma \ref{L1}, there is a unique factorization of $M$,
that is,
\[M(p,q\mathbf{n}+\mathbf{m})=\left(q-S_0^*\right)D(q-S_0), ~~~\text{Spec}\left(S_0(\mathbf{n},\mathbf{m})\right)\subset\mathbb{C}_+,\]
where $S_0(\mathbf{n},\mathbf{m})$ is independent of $q$. Changing coordinates,
\[\begin{split}\tilde{M}(p,\xi)&=JMJ^T=J(q-S_0^*)D(q-S_0)J^T\\
&=\left(q-JS_0^*J^{-1}\right)(JDJ^T)\left(q-J^{-T}S_0J^T\right).
\end{split}\]
Hence, by the fact $Spec(J^{-T}S_0 J^T)\subset {\mathbb C}_+$ and the uniqueness of the factorization, 
\[
   \tilde{S}_0 = J^{-T}S_0J^T.
\]

We define the surface impedance tensor $Z=Z(p,\mathbf{m},\mathbf{n})$ by
\[
   Z(p,\mathbf{m},\mathbf{n}) = -\mathrm{i}(DS_0+R^T).
\]
Based on the previous arguments, we can now express the principal symbol, $\lambda_0$, in terms of $Z$:

\begin{lemma}
The principal symbol $\lambda_0(x'(p),\xi')$ is related to the surface
impedance tensor as
\begin{equation}
\lambda_0(x'(p),\xi')=J Z(p,\mathbf{m},\mathbf{n}) J^T,
\end{equation}
where the relation between $\xi'$ and $\mathbf{n},\mathbf{m}$ is defined in $(\ref{relation})$.
\end{lemma}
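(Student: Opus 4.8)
The plan is to prove the identity by direct substitution of the change-of-coordinates laws into the closed-form expressions for $\lambda_0$ and for the surface impedance tensor $Z$ recorded above. Everything reduces to three transformation rules,
\[
\tilde{D} = JDJ^T, \qquad \tilde{R} = JRJ^T, \qquad \tilde{S}_0 = J^{-T}S_0J^T ,
\]
the last of which is already in hand from the uniqueness of the factorization applied to $\tilde{M} = JMJ^T$. Once these are available the computation is immediate, since every internal product $J^T J^{-T}$ collapses to the identity.

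First I would establish the two remaining rules directly from the tensor law $\tilde{C}^{abcd} = J^a_{~i}J^b_{~j}J^c_{~k}J^d_{~l}C^{ijkl}$ together with the covector correspondences forced by (\ref{relation}). Reading $J^{-T}\mathbf{n} = (0,0,1)$ and $J^{-T}\mathbf{m} = (\xi',0)$ componentwise gives $n_l = J^3_{~l}$ and $m_j = \sum_{b=1}^2 J^b_{~j}\xi_b$. Substituting these into the definitions (\ref{QRD}), the entry $\tilde{D}^{ac} = \tilde{C}^{a3c3}$ becomes $J^a_{~i}J^c_{~k}\big(\sum_{j,l}C^{ijkl}n_j n_l\big) = (JDJ^T)^{ac}$, and the entry $\tilde{R}^{ac} = \sum_{b=1}^2\tilde{C}^{abc3}\xi_b$ collapses, after absorbing the $\xi_b$-sum into the Jacobian factor, to $J^a_{~i}J^c_{~k}\big(\sum_{j,l}C^{ijkl}m_j n_l\big) = (JRJ^T)^{ac}$. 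As a consistency check, matching the coefficients of $\xi_3^2$ and $\xi_3$ in $\tilde{M} = JMJ^T$ reproduces $\tilde{D} = JDJ^T$ and the symmetrized relation $\tilde{R} + \tilde{R}^T = J(R+R^T)J^T$.

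With the three rules in place I would then combine them. The middle rule gives $\tilde{D}\tilde{S}_0 = (JDJ^T)(J^{-T}S_0J^T) = J D S_0 J^T$, so that
\[
\lambda_0 = -\mathrm{i}\big(\tilde{D}\tilde{S}_0 + \tilde{R}^T\big) = -\mathrm{i}\big(JDS_0J^T + JR^TJ^T\big) = J\big(-\mathrm{i}(DS_0 + R^T)\big)J^T = JZJ^T ,
\]
which is the claimed identity, with $J$ evaluated at $p$ (so $x^3=0$) and $\xi'$ related to $\mathbf{n},\mathbf{m}$ through (\ref{relation}).

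The hard part, and the only place where care is genuinely needed, is the transpose bookkeeping: one must verify that the tangential (first-order) contribution to the principal symbol enters through $\tilde{R}^T = JR^TJ^T$ rather than through $\tilde{R}$, so that the bracket is exactly the surface impedance tensor $Z = -\mathrm{i}(DS_0 + R^T)$ and not $-\mathrm{i}(DS_0 + R)$. This is settled by the minor and major symmetries $C^{ijkl} = C^{jikl} = C^{klij}$ of the stiffness tensor, which identify the contraction $\sum_{c=1}^2\tilde{C}^{a3cd}\xi_c$ appearing in the traction operator $\tilde{C}^{a3cd}\varepsilon_{cd}(\tilde{v})$ with $(\tilde{R}^T)^{ad}$; the same symmetries are what make the factorization of $M$ transform compatibly into that of $\tilde{M}$ under $J$. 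The only other thing to keep straight is the $\pm\mathrm{i}$ sign convention for $\tilde{S}_0$, which is fixed by the requirement $\operatorname{Spec}(\check{S}_0)\subset\mathbb{C}_+$ selecting the solution that decays into $\{x^3>0\}$.
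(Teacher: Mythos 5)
Your proposal is correct and follows essentially the same route as the paper: the lemma there is justified by exactly the preceding computation, namely $\tilde{M}=JMJ^T$, the uniqueness of the spectral factorization giving $\tilde{S}_0=J^{-T}S_0J^T$ and $\tilde{D}=JDJ^T$, and the tensor transformation law giving $\tilde{R}=JRJ^T$. Your extra care in checking that the tangential part of the traction symbol enters as $\tilde{R}^T=JR^TJ^T$ (rather than $\tilde{R}$, which the paper's displayed formula for $\lambda_0$ literally shows) is a worthwhile addition, since only with $\tilde{R}^T$ does the bracket match $Z=-\mathrm{i}(DS_0+R^T)$ for a stiffness tensor with $R\neq R^T$.
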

~\\

The reconstruction of the density and stiffness tensor for the
principal symbol is now simplified to a reconstruction from the
surface impedance tensor. The same applies to their derivatives.

%Define
%\[Q(y,\mathbf{m})=\left(\sum_{j,l=1}^3C_{ijkl}m_jm_l\right)_{i,k\rightarrow 1,2,3},~~R(y,\mathbf{m},\mathbf{n})=\left(\sum_{j,l=1}^3C_{ijkl}m_jn_l\right)_{i,k\rightarrow 1,2,3},\]
%\[T(y,\mathbf{n})=\left(\sum_{j,l=1}^3C_{ijkl}n_jn_l\right)_{i,k\rightarrow 1,2,3}.\]

%Thus, we can always choose $\mathbf{m}=\xi'$ so that $\mathbf{m}\times\mathbf{n}$ is perpendicular to $\mathbf{e}_3$. At this particular direction, the form of $\lambda_0$ is exactly the same as the one we meet for the VTI case. This would give us the reconstruction scheme for TTI case.
\section{Recovery of the material parameters}\label{piecewise}
\subsection{Recovery at the boundary}${}$\newline\indent
In this subsection, we summarize our results on recovering of stiffness tensor and the density at the boundary from $\Lambda_T^\Sigma$. We only need to recover from the surface impedance tensor $Z$ for the elliptic problem introduced above. We emphasize that in this subsection, we can take $T>0$ arbitrarily.

\begin{proposition} \label{VTI}
Assume that $\Sigma$ is flat.  For the following cases, the local DN map $\Lambda_T^\Sigma$ identifies  $(\mathbf{C},\rho)$ and all their
derivatives on $\Sigma$ uniquely. There is an explicit
reconstruction procedure for these identifications:
\begin{enumerate}
\item The stiffness tensor $\mathbf{C}$ is transversely isotropic in a neighborhood of $\Sigma$, with the symmetry axis normal to $\Sigma$;
\item The stiffness tensor $\mathbf{C}$ is orthorhombic in a neighborhood of $\Sigma$, with one of the three (known) symmetry planes tangential to $\Sigma$;
\end{enumerate}
\end{proposition}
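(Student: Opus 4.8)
The plan is to reduce the whole problem to an algebraic inversion of the surface impedance tensor. By the estimate \eqref{op_estimates} of Section~\ref{semi}, combined with the time continuation of Section~\ref{cont}, the map $\Lambda_T^\Sigma$ (for any fixed $T>0$) determines the full semiclassical symbol of $\Lambda^{h,\Sigma}$, hence in particular the principal symbol $\lambda_0(x',\xi')$ and, through \cite[Proposition~3.4]{dHNZ}, all of its normal derivatives $D_{x^3}^\alpha\lambda_0$ on $\Sigma$. First I would use the hypothesis that $\Sigma$ is flat to choose Cartesian coordinates $y$ adapted to $\Sigma$ so that the boundary normal coordinates coincide with them and the Jacobian $J$ in \eqref{Jacobian} is a known constant (the identity after an orthogonal change of frame). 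The relation $\lambda_0(x'(p),\xi')=J\,Z(p,\mathbf{m},\mathbf{n})\,J^T$ then gives the surface impedance tensor $Z(p,\mathbf{m},\mathbf{n})$ as an explicitly known, Hermitian matrix-valued function of the tangential covector $\mathbf{m}=(\xi_1,\xi_2,0)$, for every $p\in\Sigma$, together with all its normal derivatives.

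Next I would write out $D(\mathbf{n})$, $R(\mathbf{n},\mathbf{m})$, $Q(\mathbf{m})$ in each symmetry class. Because the symmetry axis (VTI) or the symmetry planes (orthorhombic) are aligned with $\Sigma$, these matrices acquire a sparse, block form in which the unknown moduli appear through only a few entries; the density enters the principal symbol $M=Dq^2+(R+R^T)q+Q+\rho I$ only through the shift $Q\mapsto Q+\rho I$. The task is then to invert the nonlinear map $(\mathbf{C},\rho)\mapsto Z(\cdot)$. The central device is the factorization $M=(q-S_0^*)D(q-S_0)$, which gives $DS_0+S_0^*D=-(R+R^T)$ and $S_0^*DS_0=Q+\rho I$, with $Z=-\mathrm{i}(DS_0+R^T)$. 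To linearize the dependence on the moduli I would evaluate $Z$ and its $\xi'$-derivatives along the special tangential directions singled out by the symmetry, where the characteristic polynomial $\det(\check{M})=0$ factors and the Stroh system decouples (into a scalar $SH$ part and a $P$--$SV$ part), so that $S_0$, and hence $Z$, can be written in closed form. Matching these closed forms against the recovered $Z(\xi')$ yields a system of algebraic equations for the moduli and for $\rho$.

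The key new ingredient compared with elastostatics is that the recovered datum is the full, non-homogeneous function $Z(\xi')$ rather than its degree-one leading part. Scaling $\mathbf{m}=t\hat{\mathbf{m}}$ with $t\to\infty$, the term $\rho I$ is a relative $O(t^{-2})$ perturbation of $Q$, so $Z$ admits an expansion whose leading term is the static (homogeneous, degree-one) impedance and whose first correction is $\rho$-dependent. Reading off this correction separates the purely elastic contribution from the density, which is precisely what overcomes the static non-uniqueness of the VTI case recorded in \cite{NT}. Once $(\mathbf{C},\rho)$ is recovered on $\Sigma$, all normal derivatives follow by differentiating the same closed-form relations and inserting the recovered $D_{x^3}^\alpha\lambda_0$, while the tangential derivatives are immediate since $Z$ is known on all of $\Sigma$.

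I expect the main obstacle to be this final inversion. The map $(\mathbf{C},\rho)\mapsto Z$ is nonlinear and only implicitly defined, because $S_0$ is the stable solution of a matrix quadratic given by the contour integral of Lemma~\ref{L1} rather than a polynomial in the entries of $\mathbf{C}$. The real work lies in showing that evaluating $Z$ — together with finitely many of its $\xi'$-derivatives — at the symmetry-adapted directions produces a system that is explicitly and \emph{uniquely} solvable for all five VTI constants, respectively all nine orthorhombic constants, together with $\rho$. Verifying this solvability, and in particular confirming that the density term genuinely restores injectivity in the VTI case where the static data fail, is the crux of the argument.
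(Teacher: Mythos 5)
Your overall strategy coincides with the paper's: pass from $\Lambda_T^\Sigma$ to the full semiclassical symbol of $\Lambda^{h,\Sigma}$, identify the principal symbol with the surface impedance tensor via $\lambda_0=JZJ^T$ (with $J$ trivial because $\Sigma$ is flat), exploit the block structure of $D,R,Q$ along symmetry-adapted directions, and use the non-homogeneity of $Z$ in $|\mathbf{m}|$ --- which enters only through the $\rho I$ term in $M$ --- to obtain more equations than elastostatics provides. Your diagnosis of why the static obstruction of \cite{NT} disappears is exactly the one the paper gives.

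However, you have deferred precisely the step that constitutes the proof, and you say so yourself (``verifying this solvability \dots is the crux''). The proposition asserts unique identification \emph{and} an explicit reconstruction, and neither follows from the structural observations alone: one must actually exhibit $S_0$, hence $Z$, in closed form and then check that the resulting algebraic system determines every modulus and $\rho$. In the paper this is done by (i) conjugating $D,R,Q$ by the rotation $P(\mathbf{m})$ and verifying the ansatz $S_0=P(\mathbf{m})D^{-1/2}(A+\mathrm{i}B)D^{1/2}P(\mathbf{m})^*$ with the explicit entries $\alpha_1,\alpha_2,a,b,c,\gamma$ of (\ref{ABform}), which yields the closed form (\ref{symbol_VTI}) of $Z$ (and (\ref{symbol_O1})--(\ref{symbol_O2}) in the orthorhombic case); and (ii) the step-by-step inversion of Appendix~\ref{calculation}, whose engine is Lemma~\ref{l1}: the ratio $(Z^{(22)}(|\mathbf{m}|\mathbf{e}_2))^2/(Z^{(33)}(|\mathbf{m}|\mathbf{e}_2))^2$, viewed as the rational function $a+c/(t+b)$ of $t=|\mathbf{m}|^2$, yields $C^{1111}/C^{3333}$, $\rho/C^{1313}$ and a third combination from its value and first two derivatives at $t=1$, while $(Z^{(11)}(|\mathbf{m}|\mathbf{e}_2))^2$ at $|\mathbf{m}|=1,\sqrt{2}$ gives $C^{1212}C^{1313}$ and $\rho C^{1313}$; only after these specific combinations are shown to close do all five (respectively nine) moduli, $\rho$, and their derivatives follow. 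Your alternative of a large-$|\mathbf{m}|$ expansion is plausible in spirit but requires the same explicit count: the leading homogeneous term carries only the four static equations, so you would have to verify that the $O(|\mathbf{m}|^{-2})$ corrections, possibly to more than one order, restore injectivity --- the same computation in different clothing. As written, the proposal identifies the correct route but does not establish the result.
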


The proof of the above proposition can be found in Appendix \ref{calculation}.
~\\

\begin{rem}
For the transversely isotropic case that the symmetric axis is nowhere normal to $\Sigma$, we can also obtain the reconstruction if $T$ is large enough. See Proposition $\ref{TTI}$ in the next section. 
\end{rem}
~\\

\begin{proposition} \label{homog}
Assume that the stiffness tensor $\mathbf{C}$ is homogeneous in a neighborhood of $\Sigma$,  and $\Sigma$ has a curved part. The local DN map $\Lambda_T^\Sigma$ identifies  $(\mathbf{C},\rho)$ in this neighborhood uniquely.
\end{proposition}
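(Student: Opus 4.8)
The plan is to reduce everything to the surface impedance tensor and then exploit the open set of boundary normals supplied by the curvature of $\Sigma$. By the estimate (\ref{op_estimates}), for any fixed $T>0$ the operator $\Lambda_T^\Sigma$ determines the full semiclassical symbol of $\Lambda^{h,\Sigma}$, and in particular its principal part $\lambda_0(x',\xi')$. The lemma relating the principal symbol to the surface impedance tensor gives $\lambda_0(x'(p),\xi')=JZ(p,\mathbf m,\mathbf n)J^T$; since $\Sigma$ and its embedding are known, the Jacobian $J$ of (\ref{Jacobian}) is known, so we recover $Z(p,\mathbf m,\mathbf n)$ for every $p\in\Sigma$ and every $\mathbf m\perp\mathbf n(p)$, with $\mathbf m$ and $\xi'$ linked through (\ref{relation}). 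Because $\mathbf C$ and $\rho$ are constant near $\Sigma$, $Z$ depends on $p$ only through the normal $\mathbf n(p)$. As $p$ ranges over the curved part of $\Sigma$, $\mathbf n(p)$ sweeps an open set $\mathcal N$ of directions; hence we know $Z(\mathbf m,\mathbf n)$ for all $\mathbf n\in\mathcal N$ and all $\mathbf m\perp\mathbf n$.

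Next I would convert this into algebraic data about $\mathbf C$ and $\rho$. Expanding the factorization (\ref{fac M}) of Lemma \ref{L1} and using $Z=-\mathrm i(DS_0+R^T)$ yields the Hermitian Riccati identity
\[
ZD^{-1}Z+\mathrm i\bigl(ZD^{-1}R^T-RD^{-1}Z\bigr)=Q+\rho-RD^{-1}R^T,
\]
where $D=D(\mathbf n)$, $R=R(\mathbf n,\mathbf m)$ and $Q=Q(\mathbf m)$. Two limits in $\mathbf m$ (equivalently in $\xi'$) decouple the unknowns. At $\mathbf m=0$ one has $R=Q=0$, so $Z_0:=Z(\mathbf 0,\mathbf n)$ satisfies $Z_0D^{-1}Z_0=\rho$, i.e. $Z_0^2=\rho\,D(\mathbf n)$; thus $\rho^{-1}D(\mathbf n)$ is known for $\mathbf n\in\mathcal N$. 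In the regime $|\mathbf m|\to\infty$ the scalar $\rho$ is negligible against the quadratic growth of $Q$ and $RD^{-1}R^T$, so the leading term of $Z$ is the $\rho$-independent static impedance $Z_{\mathrm{st}}(\mathbf m,\mathbf n)$, homogeneous of degree one in $\mathbf m$, while the subleading terms carry $\rho$.

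I would then recover $\mathbf C$ from the static impedance and, afterwards, $\rho$. Applying the identity with $\rho=0$ and expanding in powers of $\mathbf m$ recovers $D(\mathbf n)$, $R(\mathbf n,\mathbf m)$ and $Q(\mathbf m)$, that is, the contractions $C^{ijkl}a_jb_l$ with $a,b$ ranging over $\{\mathbf n\}\cup\mathbf n^\perp$. Letting $\mathbf n$ vary over the open set $\mathcal N$ makes $a,b$ range over all of $\mathbb R^3$; by polarization, identification of the resulting homogeneous polynomials in the entries of $a,b$, and the symmetries $C^{ijkl}=C^{jikl}=C^{klij}$, all $21$ moduli are determined. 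With $\mathbf C$, and hence $D(\mathbf n)$, in hand, the relation $Z_0^2=\rho\,D(\mathbf n)$ fixes $\rho$. This yields uniqueness of $(\mathbf C,\rho)$, and since any $T>0$ suffices to recover $\lambda_0$, the proposition follows.

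The main obstacle is the algebraic inversion in the middle step: the map $(\mathbf C,\rho)\mapsto Z$ is nonlinear through the matrix factorization (\ref{fac M}), and one must argue it is injective on the available data. The essential point is that the impedance tensor for a single fixed normal does not determine a fully anisotropic $\mathbf C$ (consistent with the fact that Proposition \ref{VTI} must impose a symmetry class on a flat face); the acoustic tensor $D(\mathbf n)$ alone sees only the $j\!\leftrightarrow\!l$ symmetrized part of $\mathbf C$, so the complementary information must come from the $\mathbf m$-dependence encoded in $R$ and $Q$, and from letting $\mathbf n$ vary. The curvature of $\Sigma$ is precisely what furnishes the open set of normals needed to probe every contraction of $\mathbf C$. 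A secondary difficulty is separating $\rho$ from the overall scale of $\mathbf C$, which is resolved by isolating the $\rho$-free static part ($|\mathbf m|\to\infty$) from the low-frequency value $Z_0$ ($\mathbf m=0$).
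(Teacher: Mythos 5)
Your first step—recovering $Z(p,\mathbf m,\mathbf n)$ from the principal symbol as $p$ ranges over the curved part of $\Sigma$—matches the paper, and your Riccati identity obtained by expanding the factorization (\ref{fac M}) is algebraically correct (though note $Z_0^2=\rho D$ gives you $\rho D(\mathbf n)$, not $\rho^{-1}D(\mathbf n)$). The proof breaks down, however, exactly where you flag "the main obstacle": the claim that "applying the identity with $\rho=0$ and expanding in powers of $\mathbf m$ recovers $D(\mathbf n)$, $R(\mathbf n,\mathbf m)$ and $Q(\mathbf m)$" is asserted, not proved, and it is the entire content of the proposition. The Riccati identity couples the unknowns $D^{-1}$, $R$, $Q$ quadratically with $Z$, and expanding in $|\mathbf m|$ does not decouple them: the $|\mathbf m|\to\infty$ regime reduces precisely to the elastostatic surface impedance tensor, from which it is known that even a transversely isotropic tensor cannot be recovered for a fixed normal \cite{NT}, and no argument is given that letting $\mathbf n$ vary repairs this for a fully anisotropic $\mathbf C$. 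A secondary gap: the paper's curvature hypothesis ($D^2\varphi\not\equiv 0$) only guarantees that $\mathbf n(p)$ sweeps a \emph{curve} on $S^2$ (e.g.\ a cylindrical patch), not an open set, so the polarization argument you invoke to reach all contractions $C^{ijkl}a_jb_l$ is not available as stated.

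The paper avoids the algebraic inversion entirely. It shows that $\bigl(\mathrm{Re}\,Z(\mathbf m,\mathbf n)\bigr)^{-1}$ equals (up to a constant) the X-ray transform along $\mathbf n$ of $\hat\Gamma$, the Fourier transform of the fundamental solution $\Gamma$ of $\mathcal M$; by the Fourier slice theorem this yields $\Gamma(y)$ on the plane $y\perp\mathbf n$ for each available normal. A curve of normals already produces $\Gamma$ on an open subset of $\mathbb R^3\setminus\{0\}$, and since $\Gamma$ is real-analytic away from the origin (constant coefficients), analytic continuation recovers $\Gamma$ globally, hence $\hat\Gamma(\eta)=M(\eta)^{-1}$ for all $\eta$, hence $\mathbf C$ and $\rho$ by reading off the polynomial $M$. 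If you want to salvage your route, you would need to supply the injectivity of $(\mathbf C,\rho)\mapsto Z(\cdot,\cdot)$ restricted to the available $(\mathbf m,\mathbf n)$—which is an open problem in the generality you need—or switch to the fundamental-solution argument.
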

~\\

The proof of the  above proposition can be found in Appendix \ref{homo}.

\subsection{Recovery in the interior}${}$\newline\indent
We finally consider the recovery of a piecewise analytic density and
stiffness tensor in the interior of the domain. We begin with estimate (\ref{op_estimates}), leading to
\[\Lambda^{h,\Sigma}=\lim_{T\rightarrow\infty}h\mathcal{L}_T\Lambda_T^\Sigma\chi\chi_1^{-1}(h;T).\]
Therefore, we can consider the fully elliptic problem if we have
$\Lambda_{T^*}^\Sigma$, where $T^*>2r$ with $r$ defined in Lemma~\ref{Holmgren}
as the data, and adapt the procedure in \cite{CN} to study the problem
of recovering piecewise analytic material parameters. Once we have the boundary determination at $\Sigma$, by analyticity of the coefficients in subdomain $D_1$, we can propagate the data
to the interior interface $\Sigma_2$, and iterate the boundary determination results. We will sketch the procedure below in detail.

Now, we have the exact elliptic local DN map $\Lambda^{h,\Sigma}$. For the TI case, if the symmetry axis is normal to
$\Sigma$, we can recover the parameters on the boundary from the semiclassical symbol of $\Lambda^{h,\Sigma}$. However, if the symmetry axis is not normal to $\Sigma$, this
approach would fail.
Then we consider $\Lambda^{h,\Sigma}$ as a classical
pseudodifferential operator, and adapt the procedure developed in \cite{NTU} for their
reconstruction. (We can reduce to the above two situations by possibly
passing to further subset of $\Sigma$.)~\\

\begin{proposition}\label{TTI}
Assume that $\mathbf{C}$ is smooth and transversely isotropic with
symmetry axis nowhere normal to $\Sigma$, and assume that $\rho$ is
smooth. Then we have an explicit reconstruction of $\rho$ and
$\mathbf{C}$, as well as their derivatives on $\Sigma$, from the
symbols of $\Lambda^{h_1,\Sigma}$ and $\Lambda^{h_2,\Sigma}$, $h_1 \neq h_2$,
considered as classical pseudodifferential operators.
\end{proposition}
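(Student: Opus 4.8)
The plan is to reduce the reconstruction of the transversely isotropic (TI) parameters to an analysis of the classical symbol of the elliptic DN map, exploiting the extra degree of freedom supplied by the semiclassical parameter $h$. First I would recall from Section~\ref{symbol} that, once the exact operator $\Lambda^{h,\Sigma}$ is available (which Section~\ref{cont} guarantees from $\Lambda_{T^*}^\Sigma$ for $T^*>2r$), we may for each fixed $h$ view $\Lambda^{h,\Sigma}$ as a \emph{classical} pseudodifferential operator and extract its full classical symbol. The key observation is that the principal symbol of $\mathcal{M}$ in (\ref{transformed eq}) is $\rho\,\tau^2 + $ (the elasticity symbol), so the dependence on $h=1/\tau$ enters only through the zeroth-order term $\rho\,h^{-2}$ relative to the second-order elastic operator. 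Concretely, I would fix two distinct values $h_1\neq h_2$ and compare the surface impedance tensors $Z_{h_1}$ and $Z_{h_2}$ obtained from the corresponding principal symbols via the lemma relating $\lambda_0$ to $Z=-\mathrm{i}(DS_0+R^T)$. Because $D$, $R$, $Q$ are independent of $h$ while the additive $\rho\,h^{-2}$ term in $M(p,q\mathbf{n}+\mathbf{m})$ scales with $h$, the two data sets $Z_{h_1},Z_{h_2}$ furnish genuinely independent algebraic constraints on the five TI moduli and $\rho$.

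The main body of the argument is then the inversion of the map from the TI stiffness parameters (and $\rho$) to these two surface impedance tensors. I would follow the static reconstruction scheme of Nakamura, Tanuma and Uhlmann \cite{NTU}: for a TI medium with tilted symmetry axis one writes the algebraic (integral) formula for $S_0$, hence for $Z$, as an explicit function of the moduli, the tilt angle, and the covector $\mathbf{m}$, and then differentiates or evaluates this expression in $\mathbf{m}$ (equivalently in $\xi'$) to produce enough independent equations. The tilt being nowhere normal to $\Sigma$ is exactly the condition under which the angular dependence of $Z$ on $\mathbf{m}$ is nondegenerate, so that the static data already over-determine the elastic moduli on $\Sigma$; the second frequency $h_2$ supplies the additional equation needed to separate $\rho$ from the moduli. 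The derivatives of $(\mathbf{C},\rho)$ on $\Sigma$ in the normal direction are then recovered, as in \cite{NTU} and via \cite[Proposition~3.4]{dHNZ}, from the subprincipal and lower-order terms of the classical symbol, using the identity that $D^\alpha_{x^3}\lambda_0$ is determined by the $\lambda_{-j}$.

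The hard part will be making the two-frequency comparison rigorous at the level of the \emph{classical} symbol expansion while simultaneously tracking the $h$-dependence. Specifically, one must verify that the contour-integral representation of $S_0$ in Lemma~\ref{L1} depends on $h$ in a controlled, explicitly invertible way—i.e.\ that the roots $\zeta_j(h)$ of $\det\check{M}(x,\xi',\zeta)=0$ move with $h$ so as to make the Jacobian of the parameter-to-$(Z_{h_1},Z_{h_2})$ map invertible—and that this holds uniformly on the relevant portion of $T^*\Sigma$. I would address this by treating $\rho\,h^{-2}$ as a spectral shift in $\check M$ and showing, via analytic perturbation of the spectral projections appearing in $\check{S}_0$, that the difference $Z_{h_1}-Z_{h_2}$ isolates $\rho$ up to a nonvanishing factor; the nowhere-normal hypothesis on the symmetry axis guarantees this factor does not degenerate. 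Once invertibility of this algebraic system is established on $\Sigma$, the reconstruction of all normal derivatives proceeds by the same iteration as in the flat VTI and orthorhombic cases of Proposition~\ref{VTI}.
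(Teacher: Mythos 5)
The paper does not actually write out a proof of Proposition~\ref{TTI}: it only indicates that one should regard $\Lambda^{h,\Sigma}$ as a classical pseudodifferential operator and adapt the tilted transversely isotropic layer-stripping reconstruction of \cite{NTU}, with the lower-order terms of the classical symbol (via \cite[Proposition~3.4]{dHNZ}) supplying the derivatives and the two values of $h$ separating out $\rho$. Your plan is aligned with that intent in outline, but the specific mechanism you give for extracting $\rho$ does not work. You propose to compare the surface impedance tensors $Z_{h_1}$ and $Z_{h_2}$ obtained from the \emph{principal} symbols and assert that they ``furnish genuinely independent algebraic constraints.'' They do not. If ``principal symbol'' is taken in the classical sense, it is determined by the principal symbol of $\mathcal{M}$ alone, which is $h^2$ times the elasticity symbol; the zeroth-order term $\rho$ drops out entirely, so the classical principal symbols at $h_1$ and $h_2$ differ by a known scalar factor and carry identical information (this is precisely why the classical principal symbol is the \emph{static} impedance tensor to which \cite{NTU} applies). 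If instead you keep the $\rho h^{-2}$ term inside the matrix polynomial, as your ``spectral shift'' discussion suggests, then writing $M_h(q,\mathbf{m})=Dq^2+(R+R^T)q+Q(\mathbf{m})+\rho h^{-2}I=h^{-2}M_1(hq,h\mathbf{m})$ and invoking uniqueness of the spectral factorization gives $S_0^{(h)}(\mathbf{m})=h^{-1}S_0^{(1)}(h\mathbf{m})$ and hence $Z_{h}(\mathbf{m})=h^{-1}Z_{1}(h\mathbf{m})$; knowing $Z_{h_1}$ and $Z_{h_2}$ as functions of $\mathbf{m}$ is therefore exactly equivalent to knowing $Z_{1}$ alone, again yielding no independent constraint. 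At the principal level the second frequency buys you nothing, and your claim that the difference $Z_{h_1}-Z_{h_2}$ ``isolates $\rho$ up to a nonvanishing factor'' fails.

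The place where $h_1\neq h_2$ genuinely matters is further down the classical expansion. The order-one, $\rho$-free principal symbol yields the five TI moduli by the scheme of \cite{NTU}, and this is where the hypothesis that the symmetry axis is nowhere normal to $\Sigma$ is used. The term of order $-1$ (two orders below the principal symbol, since $\rho$ sits two orders below the leading part of $\mathcal{M}$) contains, additively, contributions from derivatives of $\mathbf{C}$ together with a term proportional to $\rho h^{-2}$ whose coefficient is already determined by the principal symbol. Differencing this term at $h_1$ and $h_2$ annihilates the $h$-independent derivative contributions and isolates $\rho\,(h_1^{-2}-h_2^{-2})$, from which $\rho$ follows; one then returns to the remaining lower-order terms to read off the derivatives of $\mathbf{C}$ and $\rho$ iteratively. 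You should restructure your argument so that the two-frequency comparison is carried out on these lower-order classical symbol terms rather than on the principal-symbol impedance tensors themselves.
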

~\\

With all the boundary determination results developed before, we are ready to have the uniqueness for interior determination of piecewise analytic parameters.
We assume the domain partitioning introduced in Section \ref{cont} throughout this section. First, by Propositions \ref{VTI}, \ref{homog} and \ref{TTI}, we have

\begin{proposition}
If $\Lambda^{h,\Sigma}_{\mathbf{C}_1,\rho_1}=\Lambda^{h, \Sigma}_{\mathbf{C}_2,\rho_2}$ and on $D_1$, $\mathbf{C}_j,\rho_j$, $j=1,2$ are analytic and one of the following conditions holds:
\begin{enumerate}
\item $\mathbf{C}_j$ are TI with a known symmetry axis, that is, there exist Cartesian coordinates $y$ in $D_1$, such that the nonzero components of $\mathbf{C}_j(y)$ are those listed in Appendix \ref{calculation};
\item $\Sigma$ is flat, and $\mathbf{C}_j$ are orthorhombic with one
  of the three (known )symmetry planes tangential to $\Sigma$, that
  is, there exist Cartesian coordinates $y$ in $D_1$, such that the
  nonzero components of $\mathbf{C}_j(y)$ are those listed in
  Appendix \ref{calculation};
\item $\Sigma$ is partly curved, and $\mathbf{C}_j,\rho_j$, $j=1,2$ are constant in $D_1$;
\end{enumerate}
then $\mathbf{C}_1=\mathbf{C}_2$, $\rho_1=\rho_2$ on $D_1$.
\end{proposition}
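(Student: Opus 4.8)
The plan is to reduce the equality of the elliptic DN maps to an equality of the reconstructed coefficients on (a piece of) $\Sigma$ by invoking the boundary determination results of the previous subsection, and then to propagate this equality throughout $D_1$ by analytic continuation. This is only the base step of the interior argument: once $(\mathbf{C},\rho)$ has been pinned down on $D_1$ one would continue the DN map to the next interface $\Sigma_2$ and iterate, but here I treat $D_1$ alone.

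\emph{Step 1 (boundary determination).} First I would observe that $\Lambda^{h,\Sigma}_{\mathbf{C}_1,\rho_1}=\Lambda^{h,\Sigma}_{\mathbf{C}_2,\rho_2}$ forces the two operators to share the same full symbol, hence the same principal symbol $\lambda_0$ and, through the lemma of Section~\ref{symbol} relating $\lambda_0$ to the surface impedance tensor $Z$, the same $Z$ along $\Sigma$. In case (2) I would then invoke Proposition~\ref{VTI}(2) and in case (3) Proposition~\ref{homog}: the former yields agreement of $\mathbf{C}_j,\rho_j$ together with all their derivatives on $\Sigma$, the latter agreement on a full neighborhood of the curved portion of $\Sigma$. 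Case (1) needs a preliminary reduction. Since the symmetry axis is a fixed, known direction, the subset of $\Sigma$ where the outer normal is parallel to the axis is closed; I would therefore pass to an open $\Sigma'\subset\Sigma$ on which the axis is either everywhere normal---which forces $\Sigma'$ to be planar with normal along the axis, so that Proposition~\ref{VTI}(1) applies via the semiclassical symbol---or nowhere normal, in which case Proposition~\ref{TTI} applies using two distinct parameters $h_1\neq h_2$, the exact operators being at our disposal by hypothesis. In every case I obtain equality of $\mathbf{C}_j,\rho_j$ and all of their derivatives on an open boundary patch.

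\emph{Step 2 (analytic continuation).} Next I would upgrade boundary agreement to interior agreement. In case (3) constancy already gives $\mathbf{C}_1=\mathbf{C}_2$ and $\rho_1=\rho_2$ on all of $D_1$. In cases (1)--(2) I would set $f=\mathbf{C}_1-\mathbf{C}_2$ (componentwise) and $g=\rho_1-\rho_2$, which are real-analytic up to $\partial D_1$ and whose derivatives of every order vanish at each point of the boundary patch. Fixing a point $p_0$ there, analyticity up to the boundary means $f$ and $g$ are sums of convergent power series in a full neighborhood $V\subset\mathbb{R}^3$ of $p_0$; as all Taylor coefficients at $p_0$ vanish, $f$ and $g$ vanish on $V$, in particular on the nonempty open set $V\cap D_1$. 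Since $D_1$ is connected, the identity theorem for real-analytic functions then gives $f\equiv g\equiv 0$ on $D_1$, i.e. $\mathbf{C}_1=\mathbf{C}_2$ and $\rho_1=\rho_2$ on $D_1$.

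I expect the main obstacle to lie in Step~1, case~(1): one must verify that after restricting to $\Sigma'$ the geometric hypotheses of the invoked proposition are genuinely met, and, in the tilted subcase, that the symbols at $h_1\neq h_2$ read off from the exact operators supply enough data for the explicit scheme of \cite{NTU}. Step~2 is then essentially routine; its one delicate ingredient is that knowledge of all boundary derivatives determines the full Taylor series at $p_0$, which is precisely what analyticity up to the boundary provides.
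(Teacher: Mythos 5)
Your proposal is correct and follows essentially the same route the paper intends: the paper derives this proposition directly from Propositions \ref{VTI}, \ref{homog} and \ref{TTI} (including the same reduction "by possibly passing to further subset of $\Sigma$" in the TI case), and then uses analyticity on $D_1$ to propagate the boundary agreement of all derivatives into the interior. Your Step 2 merely spells out the analytic-continuation argument that the paper leaves implicit.
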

~\\

%\begin{proposition}
%If $\Lambda^{h,\Sigma}_{\mathbf{C}_1,\rho_1}=\Lambda^{h,
%  \Sigma}_{\mathbf{C}_2,\rho_2}$, and , then
%  $\mathbf{C}_1=\mathbf{C}_2$, $\rho_1=\rho_2$ on $D_1$.
%\end{proposition}
%~\\

In order to use the boundary determination results to have the uniqueness in the interior, we need the propagation of the DN map. Let $D_\beta,\beta=1,2\,\cdots,\alpha$ be a chain of subdomains of $\Omega$ such that $\Sigma_1:=\Sigma\subset\partial D_1$. Here the chain of subdomains $D_\beta$'s means that it satisfies the following conditions: (i) $D_\beta\cap D_{\beta'}=\emptyset$ if $\beta\neq\beta'$; (ii) there are nonempty smooth surfaces $\Sigma_\beta\subset\overline{D_\beta}\cap\overline{D_{\beta+1}}$, $\beta=1,2,\cdots,\alpha-1$. Further let $\Omega_\alpha=\Omega\setminus \cup_{\beta=1}^{\alpha-1} \overline{D_\beta}$, and $\Sigma_\alpha\subset\partial\Omega_\alpha$ be open, connected and smooth. Define $\Lambda^{h,\Sigma_\alpha}_{\mathbf{C},\rho}$ similar to  $\Lambda^{h,\Sigma}_{\mathbf{C},\rho}$ with $(\Omega,\Sigma)$ replaced by $(\Omega_\alpha,\Sigma_\alpha)$. By adapting the argument in \cite{Ike}, we have the following results analogous to \cite{CN}:

%\begin{proposition}\label{prop}
%If $\Lambda^{h,\Sigma}_{\mathbf{C}_1,\rho_1}=\Lambda^{h,
%  \Sigma}_{\mathbf{C}_2,\rho_2}$, then
%$\Lambda^{h,\Sigma_\alpha}_{\mathbf{C}_1,\rho_1}=\Lambda^{h,\Sigma_\alpha}_{\mathbf{C}_2,\rho_2}$,
%$\alpha=2,\cdots,K$. In fact, if $(C_j,\rho_j)$, $j=1,2$, are known in
%$\Omega\setminus\Omega_\alpha$, then
%$\Lambda_{\mathbf{C}_j,\rho_j}^{h,\Sigma_\alpha}$ determines
%$\Lambda_{\mathbf{C}_j,\rho_j}^{h,\Sigma_{\alpha+1}}$.
%\end{proposition}
%~\\

\begin{theorem}
Suppose that $\Lambda^{h,\Sigma}_{\mathbf{C}_1,\rho_1}=\Lambda^{h,\Sigma}_{\mathbf{C}_2,\rho_2}$. If on each subdomain $D_\alpha$, $\mathbf{C}_j,\rho_j$, $j=1,2$ are analytic up to the boundary of $D_\alpha$ and satisfy either of the following conditions on $D_\alpha$:
\begin{enumerate}
\item $\mathbf{C}_j$  is TI with a known symmetry axis;
\item $\Sigma_\alpha$ is flat and $\mathbf{C}_j$ are orthorhombic with one of the three symmetry planes tangential to $\Sigma_\alpha$ for each $\alpha$;
\item $\Sigma_\alpha$ is partly curved and $D_\alpha$, $\mathbf{C}_j, \rho_j$, $j=1,2$ are
constant;
\end{enumerate}
then $\mathbf{C}_1=\mathbf{C}_2$, $\rho_1=\rho_2$.
\end{theorem}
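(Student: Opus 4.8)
The plan is to peel off the subdomains one at a time along the chain $D_1,D_2,\dots,D_K$, at each stage using the boundary-determination results to identify the coefficients on the subdomain currently adjacent to the data surface and then continuing the exact elliptic local DN map across the next interface. The base case is precisely the preceding Proposition: from $\Lambda^{h,\Sigma}_{\mathbf{C}_1,\rho_1}=\Lambda^{h,\Sigma}_{\mathbf{C}_2,\rho_2}$ and the fact that $D_1$ falls under one of the three structural regimes, we obtain $\mathbf{C}_1=\mathbf{C}_2$ and $\rho_1=\rho_2$ on $D_1$. I then run an induction on $\alpha$ whose hypothesis is that $\mathbf{C}_1=\mathbf{C}_2$, $\rho_1=\rho_2$ on $D_1\cup\cdots\cup D_{\alpha-1}$ and that the local DN maps agree on the reduced domain, $\Lambda^{h,\Sigma_\alpha}_{\mathbf{C}_1,\rho_1}=\Lambda^{h,\Sigma_\alpha}_{\mathbf{C}_2,\rho_2}$, where $\Omega_\alpha=\Omega\setminus\bigcup_{\beta<\alpha}\overline{D_\beta}$ and $\Sigma_\alpha\subset\partial\Omega_\alpha$ is the exposed interface facing $D_\alpha$. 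The inductive step has two halves: recover the coefficients on $D_\alpha$, and continue the DN map to $\Sigma_{\alpha+1}$.

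For the first half, the exact operator $\Lambda^{h,\Sigma_\alpha}$ (not merely its semiclassical symbol) is available: for $\alpha=1$ this is the hypothesis, and for $\alpha>1$ it is furnished by the previous continuation step; the exact operator is legitimate given the time-continuation Theorem and the estimate $(\ref{op_estimates})$. Hence Propositions \ref{VTI}, \ref{homog} and \ref{TTI} apply to the pair $(\Omega_\alpha,\Sigma_\alpha)$. Whichever of the three hypotheses holds on $D_\alpha$ --- TI with known axis, handled by the classical-symbol reconstruction of Proposition \ref{TTI} when the axis is tilted and by Proposition \ref{VTI} when it is normal to a flat $\Sigma_\alpha$; flat orthorhombic, handled by Proposition \ref{VTI}; or curved homogeneous, handled by Proposition \ref{homog} --- these results yield $\mathbf{C}_1=\mathbf{C}_2$, $\rho_1=\rho_2$ together with all their derivatives on $\Sigma_\alpha$. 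Since the coefficients are analytic up to the boundary of the connected set $D_\alpha$, agreement of all derivatives on the open piece $\Sigma_\alpha$ forces them to coincide throughout $D_\alpha$ by real-analytic continuation.

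For the second half I continue the local elliptic DN map from $\Sigma_\alpha$ to the next interface $\Sigma_{\alpha+1}\subset\overline{D_\alpha}\cap\overline{D_{\alpha+1}}$, showing $\Lambda^{h,\Sigma_{\alpha+1}}_{\mathbf{C}_1,\rho_1}=\Lambda^{h,\Sigma_{\alpha+1}}_{\mathbf{C}_2,\rho_2}$ on $\Omega_{\alpha+1}=\Omega_\alpha\setminus\overline{D_\alpha}$, following the variational continuation of Ikehata \cite{Ike} as adapted in \cite{CN}. The mechanism is transparent: letting $v_j^f$ solve $\mathcal{M}_j v_j^f=0$ in $\Omega_\alpha$ with Dirichlet datum $f$ on $\Sigma_\alpha$, the two media now share their coefficients on $D_\alpha$ and, because $\Lambda^{h,\Sigma_\alpha}_1=\Lambda^{h,\Sigma_\alpha}_2$, their Cauchy data on $\Sigma_\alpha$; by Holmgren's uniqueness theorem and the analyticity of the coefficients on the connected set $D_\alpha$ it follows that $v_1^f\equiv v_2^f$ throughout $D_\alpha$, so their Cauchy data coincide on $\Sigma_{\alpha+1}$ as well. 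Thus the continued map on $\Sigma_{\alpha+1}$ depends only on $\Lambda^{h,\Sigma_\alpha}$ and the coefficients on $D_\alpha$, both identical for the two media, and Runge approximation upgrades this from a statement about the particular fields $v_j^f$ to an identity of operators on the local trace space over $\Sigma_{\alpha+1}$. Iterating through $D_1,\dots,D_K$ then gives $\mathbf{C}_1=\mathbf{C}_2$, $\rho_1=\rho_2$ on all of $\Omega$.

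The main obstacle is the Runge approximation powering the continuation. Its proof is by duality and reduces to unique continuation from the Cauchy data prescribed on $\Sigma_\alpha$ through the layered domain $\Omega_{\alpha+1}$; across the analytic interfaces this is exactly the Holmgren-type statement underlying Lemma \ref{Holmgren}, now in elliptic form, obtained by analytically continuing $\mathbf{C}$ and $\rho$ across each interface and applying Holmgren's theorem subdomain by subdomain. This is the step that makes piecewise analyticity indispensable. The remaining technical points --- tracking the co-support spaces $H^{1/2}_{co}(\Sigma_\beta)$ on successive interfaces and verifying the weak transmission of tractions across $\Sigma_{\alpha+1}$ --- are routine within the framework of \cite{CN} and do not affect the structure above.
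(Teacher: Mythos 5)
Your proposal follows essentially the same route the paper intends: an induction along the chain of subdomains in which Propositions \ref{VTI}, \ref{homog} and \ref{TTI} give boundary determination of the coefficients and all their derivatives on the exposed interface, real-analytic continuation identifies them throughout the current subdomain, and the local elliptic DN map is then pushed to the next interface by the Ikehata--C\^arstea--Honda--Nakamura variational continuation with Runge approximation backed by Holmgren-type unique continuation. The paper itself only sketches this and defers the continuation step to \cite{Ike} and \cite{CN}, so your write-up is a correct and somewhat more explicit rendering of the same argument.
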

~\\

%\begin{theorem}
%Suppose
%that $\Lambda^{h,\Sigma}_{\mathbf{C}_1,\rho_1}=\Lambda^{h,\Sigma}_{\mathbf{C}_2,\rho_2}$. If
%. Then $\mathbf{C}_1=\mathbf{C}_2$, $\rho_1=\rho_2$.
%\end{theorem}

We introduce the notion of subanalytic set: $A\subset\mathbb{R}^3$ is
said to be subanalytic if for any $x\in\overline{A}$, there exists an
open neighorhood $U$ of $x$, real analytic compact manifolds
$Y_{i,j}$, $i=1,2$, $1\leq j\leq N$ and real analytic maps
$\Phi_{i,j}:Y_{i,j}\rightarrow \mathbb{R}^3$ such that 
\[
A\cap U=\cup_{j=1}^N(\Phi_{1,j}(Y_{1,j})\setminus\Phi_{2,j}(Y_{2,j}))\cap U.
\] 
For more details and nice properties about subanalytic sets, we refer to \cite{KV} and \cite{CN}. We note here that a polyhedron with a piecewise analytic boundary is a subanalytic set.  We also emphasize that the family of subanalytic sets is closed under finite union and finite intersection. Moreover, for two relatively compact subanlytic subsets $A$ and $B$, the number of connected components of $A\cap B$ is always finite. With this property, if we have two domain partitioning $\overline{\Omega}=\cup_{\alpha}\overline{D^{(1)}_\alpha}=\cup_{\beta}\overline{D^{(2)}_\beta}$ by two sets of subdomains $D_{\alpha}^{(1)}$'s and $D_{\beta}^{(2)}$'s such that each set of subdomains are mutually disjoint subanalytic sets, we can consider the finer domain partitioning
\[
\overline{\Omega}=\cup_{\gamma}\overline{\tilde{D}_\gamma},
\]
where each $\tilde{D}_\gamma$ is a connected component of $D_\alpha^{(1)}\cap D_\beta^{(2)}$ for some $\alpha$ and $\beta$.
Therefore, with the subanalytic property of the subdomains, we can recover the domain partitioning as well by adapting the argument of \cite{CN}. 

\begin{theorem}
Suppose that $\Lambda^{h,\Sigma}_{\mathbf{C}_1,\rho_1} =
\Lambda^{h,\Sigma}_{\mathbf{C}_2,\rho_2}$ and $\Sigma$ is curved. Let
on each subdomain $D_\alpha^{(j)}$, $\mathbf{C}_j, \rho_j$ be constant
for $j=1,2$. Let, furthermore, $\Omega$ and each $D_\alpha^{(j)}$ be
open subanalytic subsets of $\mathbb{R}^3$, and all the boundaries
$\partial D_\alpha^{(j)}\setminus\partial\Omega$ contain no open flat
subsets. Then $\mathbf{C}_1 = \mathbf{C}_2$ and $\rho_1 = \rho_2$.
\end{theorem}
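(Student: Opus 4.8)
The hypothesis is phrased directly for the elliptic operators $\Lambda^{h,\Sigma}$, which by Sections~\ref{semi}--\ref{cont} are exactly the data extracted from the physical map $\Lambda^\Sigma_{T^*}$ with $T^*>2r$; so I would take the equality $\Lambda^{h,\Sigma}_{\mathbf{C}_1,\rho_1}=\Lambda^{h,\Sigma}_{\mathbf{C}_2,\rho_2}$ as genuine equality of elliptic DN maps. The first step is to form the common refinement $\{\tilde D_\gamma\}$ of the two competing partitions, whose members are the connected components of the sets $D^{(1)}_\alpha\cap D^{(2)}_\beta$. Since finite unions and intersections of subanalytic sets are subanalytic and two relatively compact subanalytic sets meet in only finitely many connected components, $\{\tilde D_\gamma\}$ is a finite partition of $\overline\Omega$ into subanalytic pieces on each of which \emph{both} media $(\mathbf{C}_1,\rho_1)$ and $(\mathbf{C}_2,\rho_2)$ are constant. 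Moreover every interior interface $\partial\tilde D_\gamma\setminus\partial\Omega$ is contained in some $\partial D^{(j)}_\cdot\setminus\partial\Omega$ and hence, by hypothesis, contains no open flat subset; in particular each relatively open portion of every interface is curved.

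The core mechanism is a flooding induction that grows a region on which the two media have been shown to agree. For the base step, choose a refinement piece $\tilde D_{\gamma_0}$ whose boundary contains a curved, relatively open portion of $\Sigma$ (such a piece exists because $\Sigma$ is curved and the refinement is finite, after shrinking $\Sigma$ to a subset if necessary, as the construction permits). On a neighborhood of that portion each medium is constant, so Proposition~\ref{homog} reconstructs the constant values $(\mathbf{C}_j,\rho_j)$ on $\tilde D_{\gamma_0}$ from $\Lambda^{h,\Sigma}_{\mathbf{C}_j,\rho_j}$, and the equality of the two DN maps forces $(\mathbf{C}_1,\rho_1)=(\mathbf{C}_2,\rho_2)$ on $\tilde D_{\gamma_0}$. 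For the inductive step I would propagate the local DN map across a determined piece following the variational continuation of Ikehata~\cite{Ike} adapted in \cite{CN}: once the media agree on $\tilde D_{\gamma_0}$ the two elliptic solutions coincide there, so their Cauchy data match on any curved interior interface $\Sigma'\subset\partial\tilde D_{\gamma_0}$ facing the undetermined region. A Runge-approximation argument, whose validity rests on the unique continuation principle for the constant-coefficient (hence analytic) elliptic system, shows that solutions driven from $\Sigma$ densely realize the relevant Cauchy data on $\Sigma'$, so $\Lambda^{h,\Sigma}$ determines the reduced local DN map $\Lambda^{h,\Sigma'}$ on $\Omega\setminus\overline{\tilde D_{\gamma_0}}$, and the two reduced maps agree. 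Since $\Sigma'$ is curved, Proposition~\ref{homog} applies to the next piece and the determination advances inward.

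Iterating over the finitely many pieces of the refinement yields $(\mathbf{C}_1,\rho_1)=(\mathbf{C}_2,\rho_2)$ on every $\tilde D_\gamma$, hence almost everywhere on $\Omega$, which is the assertion; this simultaneously identifies the material partition, since the subdomains are recovered as the maximal connected sets on which the now-common constant values are attained. The step I expect to be the main obstacle is the geometry of the refinement. Unlike the clean chain $D_1,D_2,\dots$ used in the preceding theorem, the pieces $\tilde D_\gamma$ may be adjacent in a complicated way, interfaces may meet along edges and corners, and a single piece may border the undetermined region along several faces at once. The delicate point is to order the flooding so that each newly determined region is separated from the remainder by a curved, relatively open interface portion accessible from the already-reconstructed side, so that both Proposition~\ref{homog} and the unique-continuation propagation remain applicable. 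Here the subanalytic structure is essential: it furnishes a locally finite stratification of the interfaces, and the no-flat-portion hypothesis guarantees that every interface carries a curved open piece through which the induction can proceed.
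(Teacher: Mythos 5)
Your proposal follows essentially the same route as the paper: the paper likewise forms the common refinement $\{\tilde D_\gamma\}$ from connected components of $D^{(1)}_\alpha\cap D^{(2)}_\beta$ (using closure of subanalytic sets under finite intersection and finiteness of components), applies Proposition~\ref{homog} on curved interface portions, and propagates the local DN map inward by the Ikehata/C\^arstea--Honda--Nakamura variational continuation. The paper in fact only sketches this by deferring to \cite{CN}, so your write-up, including the honest flagging of the delicate ordering of the flooding over the refinement, is a faithful and somewhat more explicit rendering of the intended argument.
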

~\\

Moreover, for isotropic elasticity, that is,
\begin{equation}\label{iso}
C^{ijkl}=\lambda(\delta^{ij}\delta^{kl})+\mu(\delta^{ik}\delta^{jl}+\delta^{il}\delta^{jk})
\end{equation}
we have
~\\

\begin{corollary}
Suppose that $\Lambda^{h,\Sigma}_{\mathbf{C}_1,\rho_1} =
\Lambda^{h,\Sigma}_{\mathbf{C}_2,\rho_2}$ and $\mathbf{C}$ is
isotropic of the form $(\ref{iso})$. Let on each subdomain
$D_\alpha^{(j)}$, $\lambda_j, \mu_j, \rho_j$ be analytic for
$j=1,2$. Let, furthermore, $\Omega$ and each $D_\alpha^{(j)}$ be open
subanalytic subsets of $\mathbb{R}^3$. Then $\lambda_1 = \lambda_2$,
$\mu_1 = \mu_2$ and $\rho_1 = \rho_2$.
\end{corollary}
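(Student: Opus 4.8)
The plan is to exploit the fact that the isotropic tensor $(\ref{iso})$ is the most symmetric case at hand: it is transversely isotropic about \emph{every} axis, so on each subdomain we may regard $\mathbf{C}_j$ as TI with a known symmetry axis, choosing whichever axis is convenient since none is distinguished. This places the isotropic problem squarely in case~(1) of the preceding interior-determination theorem, and it does so \emph{without} any hypothesis on the curvature of the interfaces. Concretely, for boundary determination on a flat piece of interface we align a symmetry axis with its normal and invoke the VTI reconstruction of Proposition~\ref{VTI}, while on a curved piece we localize and choose a symmetry axis nowhere normal to it, invoking the TTI reconstruction of Proposition~\ref{TTI}. This is precisely why the curvature assumption required for the fully anisotropic constant case, where boundary determination fails on flat interfaces, can be dropped here. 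In particular, equality of the DN maps forces $\lambda_j,\mu_j,\rho_j$ and all their derivatives to agree on $\Sigma$.

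The second ingredient disposes of the fact that the two solutions come with \emph{a priori} different partitions $\{D_\alpha^{(1)}\}$ and $\{D_\beta^{(2)}\}$. Following the subanalytic argument recorded just before the statement (and as in \cite{CN,KV}), I would pass to the common refinement whose pieces $\tilde D_\gamma$ are the connected components of the intersections $D_\alpha^{(1)}\cap D_\beta^{(2)}$. Since subanalytic sets are closed under finite intersection and such intersections have only finitely many components, this refinement is again a finite subanalytic partition of $\overline{\Omega}$. On each $\tilde D_\gamma$ both $\mathbf{C}_1$ and $\mathbf{C}_2$ are restrictions of analytic isotropic tensors, hence are themselves analytic and isotropic there.

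With these two observations the proof is the iterative scheme of the preceding theorem, now run on the refined partition. Starting from $\Sigma=\Sigma_1$, boundary determination yields equality of $(\lambda_j,\mu_j,\rho_j)$ to infinite order on an open piece of the boundary of the first refined cell; analyticity then propagates this equality throughout that connected cell. Once the coefficients coincide there, the local DN map is continued across the next interface by the Runge/variational continuation of Ikehata \cite{Ike} adapted in \cite{CN}, whose validity rests on unique continuation for the hyperbolic system, that is, on the global Holmgren--John statement of Lemma~\ref{Holmgren} together with the piecewise analyticity hypothesis. One then repeats boundary determination on the next cell and iterates along the finite chain exhausting $\Omega$, obtaining $\lambda_1=\lambda_2$, $\mu_1=\mu_2$ and $\rho_1=\rho_2$ throughout $\Omega$.

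The main obstacle I anticipate is organizational rather than analytic: one must verify that the common subanalytic refinement can be arranged into an admissible chain of the type used in the propagation step, with each refined cell reached across a nonempty smooth interface portion from the already-determined region, and that the local DN map genuinely propagates across these possibly flat interior interfaces for \emph{analytic}, not merely constant, coefficients. The latter is exactly where isotropy does the essential work: it guarantees boundary determination even on flat interfaces through Proposition~\ref{VTI}, so the continuation argument of \cite{CN}, which for general anisotropy demanded curvature, goes through here unchanged and the curvature and flatness hypotheses appearing in the earlier theorems become unnecessary.
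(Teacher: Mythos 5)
Your proposal is correct and follows essentially the route the paper intends for this corollary: isotropy makes $\mathbf{C}_j$ transversely isotropic about every axis, so after passing to the common subanalytic refinement of the two partitions each cell falls under case~(1) of the preceding theorem, boundary determination holds on flat and curved interface pieces alike, and the variational continuation of the local DN map adapted from \cite{Ike} and \cite{CN} propagates the equality cell by cell, which is exactly why the curvature hypotheses of the fully anisotropic statement can be dropped. The only cosmetic difference is that on curved pieces you route through Proposition~\ref{TTI} by declaring a non-normal symmetry axis, whereas one can equally observe that the VTI impedance-tensor computation of Appendix~\ref{calculation} applies pointwise at every boundary point after rotating Cartesian coordinates so the normal becomes $(0,0,1)$, the isotropic tensor being invariant under that rotation.
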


The first and third authors, with collaborators, proved a uniqueness and Lipschitz stability for piecewise homogeneous isotropic elastic parameters $\lambda,\mu,\rho$ with time-harmonic DN map \cite{BdHFVZ}. From the uniqueness point of view, the above theorem is a more general result with nice enough properties of domain partitioning. 

\section*{Acknowledgment}
M. V. de Hoop gratefully acknowledges support from the Simons Foundation under the MATH + X program, the National Science Foundation under grant DMS-1559587, and the corporate members of the Geo-Mathematical Group at Rice University. G. Nakamura acknowledges the supports from Grant-in-Aid for Scientific Research (15K21766 and 15H05740) of the Japan Society for the Promotion of Science (JSPS).
\appendix

\renewcommand{\theequation}{\Alph{section}.\arabic{equation}}

\setcounter{equation}{0}
\section{Reconstruction of density and stiffness tensor at the boundary from the surface impedance tensor} \label{calculation}${}$\newline\indent
We present the reconstruction scheme for the material parameters (with certain symmetries) at the boundary from the surface impedance tensor $Z$ introduced above.

\subsection{Vertically transversely isotropic case}${}$\newline\indent
We first consider the vertically transversely isotropic case. We
assume that $\Sigma$ is flat and let the outer normal unit vector be
$\mathbf{n} = (0,0,1)$ with respect to the Cartesian coordinates
$y=(y^1,y^2,y^3)$; we assume that the axis of symmetry is aligned with
this normal. Then the nonvanishing components of the VTI stiffness
tensor, $\mathbf{C}$, are
\begin{equation}\label{TInonzero}
C^{1111},~C^{2222},~C^{3333},~C^{1122},~C^{1133},~C^{2233},~C^{2323},~C^{1313},~C^{1212}
\end{equation}
with relations
\[C^{1111}=C^{2222}, ~~C^{1133}=C^{2233},\]
\[C^{2323}=C^{1313},~~C^{1212}=\frac{1}{2}(C^{1111}-C^{1122}).\]
The strong convexity condition is equivalent to 
\[C^{1313}>0,~C^{1212}>0,~C^{3333}>0,~(C^{1111}+C^{1122})C^{3333}>2(C^{1133})^2.\]
In a neighborhood of $\Sigma$, we can use boundary normal coordinates $x$ and the Cartesian coordinates $y$ identically. Using the notation in Section \ref{symbol} and suppressing the dependence on $\mathbf{n}$, $D,\,Q,\,R$ take the forms
\[D=\left(\begin{array}{ccc}
C^{1313} & 0&0\\
0&C^{1313}&0\\
0&0&C^{3333}
\end{array}\right),~~~~~R=\left(\begin{array}{ccc}
0&0 &C^{1133}m_1\\
0&0&C^{1133}m_2\\
C^{1313}m_1&C^{1313}m_2&0
\end{array}\right),\]

\[Q=\left(\begin{array}{ccc}
C^{1111}m_1^2+C^{1212}m_2^2& (C^{1212}+C^{1122})m_1m_2&0\\
(C^{1212}+C^{1122})m_1m_2&C^{1212}m_1^2+C^{1111}m_2^2&0\\
0&0&C^{1313}(m_1^2+m_2^2)
\end{array}\right).\]
Writing 
\[P(\mathbf{m})=\left(\begin{array}{ccc}
|\mathbf{m}|^{-1}m_2&|\mathbf{m}|^{-1}m_1&0\\
-|\mathbf{m}|^{-1}m_1&|\mathbf{m}|^{-1}m_2&0 \\
0 & 0 & 1
\end{array}\right),\]
we find that
\[\hat{D}=P(\mathbf{m})^*DP(\mathbf{m})=\left(\begin{array}{ccc}
C^{1313} & 0&0\\
0&C^{1313}&0\\
0&0&C^{3333}
\end{array}\right),\]

\[
\hat{R}(\mathbf{m})=P(\mathbf{m})^*R(\mathbf{m})P(\mathbf{m})=\left(\begin{array}{ccc}
0&0 &0\\
0&0&C^{1133}|\mathbf{m}|\\
0&C^{1313}|\mathbf{m}|&0
\end{array}\right),
\]

\[
\hat{Q}(\mathbf{m})=P(\mathbf{m})^*Q(\mathbf{m})P(\mathbf{m})=\left(\begin{array}{ccc}
C^{1212}|\mathbf{m}|^2& 0&0\\
0&C^{1111}|\mathbf{m}|^2&0\\
0&0&C^{1313}|\mathbf{m}|^2
\end{array}\right).
\]

We emphasize the block-diagonal structure of the above matrices, and our later
calculations will rely on this. We note that $P(\mathbf{m})$ acts as a
block-diagonalizer of $D,R,Q$ in the above calculation. Without this
block diagonalization, the calculation of $Z(\mathbf{m})$ would not be
possible.
~\\

\begin{rem}
We note that the block diagonal structure is closely related to the
decoupling of surface wave modes. The 1-by-1 block corresponds to Love
waves and the 2-by-2 block corresponds to Rayleigh waves. We refer to
\cite{dHINZ} for further discussions.
\end{rem}
~\\

Exploiting the commutativity, $D P(\mathbf{m}) = P(\mathbf{m}) D$, we obtain the decomposition
\[S_0(\mathbf{m})=P(\mathbf{m})D^{-1/2}(A+\mathrm{i}B)D^{1/2}P(\mathbf{m})^*,\]
where
\begin{equation}\label{ABform}
   A=\left(\begin{array}{ccc}
0 & 0 &0\\
0 & 0 &  -\alpha_1\\
0 &  -\alpha_2 & 0
\end{array}\right) ,\quad
B=\left(\begin{array}{ccc}
a & 0 & 0\\
0 & b & 0\\
0 & 0 & c
\end{array}\right)
\end{equation}
with
\[\alpha_1=\frac{1}{1+\gamma}\frac{(C^{1133}+C^{1313})|\mathbf{m}|}{\sqrt{C^{1313}C^{3333}}},~~\alpha_2=\gamma\alpha_1,\]
\[c=\sqrt{\frac{C^{1313}|\mathbf{m}|^2+\rho}{C^{3333}}-\frac{(C^{1133}+C^{1313})^2|\mathbf{m}|^2}{(1+\gamma)^2C^{1313}C^{3333}}},~~b=\gamma c,\]
\[a=\sqrt{\frac{C^{1212}|\mathbf{m}|^2+\rho}{C^{1313}}},~~\gamma=\sqrt{\frac{(C^{1111}|\mathbf{m}|^2+\rho)C^{3333}}{(C^{1313}|\mathbf{m}|^2+\rho)C^{1313}}}.\]
Then
\begin{multline}\label{symbol_VTI}
   Z(\mathbf{m}) = -\mathrm{i}(DS_0+R^T)
   = P(\mathbf{m})
\\
   \left(\begin{array}{ccc}
C^{1313}a & 0 & 0\\
0 & C^{1313}b& \mathrm{i}\sqrt{C^{1313}C^{3333}}\alpha_1-\mathrm{i}C^{1313}|\mathbf{m}|\\
0 & \mathrm{i}\sqrt{C^{1313}C^{3333}}\alpha_2-\mathrm{i}C^{1133}|\mathbf{m}|& C^{3333}c
\end{array}
\right)P(\mathbf{m})^* .
\end{multline}

\subsection{Orthorhombic case}${}$\newline\indent
We assume, as before, that $\Sigma$ is flat and let the outer normal
unit vector be $\mathbf{n} = (0,0,1)$ with respect to the Cartesian
coordinates $y=(y^1,y^2,y^3)$; we assume that the coordinate axes span
the symmetry planes. Then the nonvanishing components of $\mathbf{C}$ are
\begin{equation}\label{orthorhombic}
C^{1111},~C^{2222},~C^{3333},~C^{1122},~C^{1133},~C^{2233},~C^{2323},~C^{1313},~C^{1212}.
\end{equation}
The matrices $D, Q, R$ take the form
\[D=\left(\begin{array}{ccc}
C^{1313} & 0&0\\
0&C^{2323}&0\\
0&0&C^{3333}
\end{array}\right),~~~~R=\left(\begin{array}{ccc}
0&0 &C^{1133}m_1\\
0&0&C^{2233}m_2\\
C^{1313}m_1&C^{2323}m_2&0
\end{array}\right),\]

\[Q=\left(\begin{array}{ccc}
C^{1111}m_1^2+C^{1212}m_2^2& (C^{1212}+C^{1122})m_1m_2&0\\
(C^{1212}+C^{1122})m_1m_2&C^{1212}m_1^2+C^{2222}m_2^2&0\\
0&0&C^{1313}m_1^2+C^{2323}m_2^2
\end{array}\right).\]
We have block-diagonalizing matrix as for the VTI case. For particular
directions of $\mathbf{m}$, we have the following block diagonal
structure. For $\mathbf{m}=(0,|\mathbf{m}|)$, we find that
\[R(|\mathbf{m}|\mathbf{e}_2)=\left(\begin{array}{ccc}
0&0 &0\\
0&0&C^{2233}|\mathbf{m}|\\
0&C^{2323}|\mathbf{m}|&0
\end{array}\right),\]

\[Q(|\mathbf{m}|\mathbf{e}_2)=\left(\begin{array}{ccc}
C^{1212}|\mathbf{m}|^2& 0&0\\
0&C^{2222}|\mathbf{m}|^2&0\\
0&0&C^{2323}|\mathbf{m}|^2
\end{array}\right)\]
so that
\begin{equation}\label{symbol_O1}
Z(|\mathbf{m}|\mathbf{e}_2)=\left(
\begin{array}{ccc}
C^{1313}a^1 & 0 & 0\\
0 & C^{2323}b^1& \mathrm{i}\sqrt{C^{2323}C^{3333}}\alpha^1_1-\mathrm{i}C^{2323}|\mathbf{m}|\\
0 & \mathrm{i}\sqrt{C^{2323}C^{3333}}\alpha^1_2-\mathrm{i}C^{2233}|\mathbf{m}|& C^{3333}c^1,\end{array}
\right),
\end{equation}
where
\[\alpha^1_1=\frac{1}{1+\gamma^1}\frac{(C^{2233}+C^{2323})|\mathbf{m}|}{\sqrt{C^{2323}C^{3333}}},~~\alpha^1_2=\gamma^1\alpha^1_1,\]
\[c^1=\sqrt{\frac{C^{2323}|\mathbf{m}|^2+\rho}{C^{3333}}-\frac{(C^{2233}+C^{2323})^2|\mathbf{m}|^2}{(1+\gamma^1)^2C^{2323}C^{3333}}},~~b^1=\gamma^1 c^1,\]
\[a^1=\sqrt{\frac{C^{1212}|\mathbf{m}|^2+\rho}{C^{1313}}},~~\gamma^1=\sqrt{\frac{(C^{2222}|\mathbf{m}|^2+\rho)C^{3333}}{(C^{2323}|\mathbf{m}|^2+\rho)C^{2323}}}.\]

Similarly, for $\mathbf{m}=(|\mathbf{m}|,0)$, we obtain
\begin{equation}\label{symbol_O2}Z(|\mathbf{m}|\mathbf{e}_1)=\left(
\begin{array}{ccc}
C^{1313}b^2 & 0 & \mathrm{i}\sqrt{C^{1313}C^{3333}}\alpha^2_1-\mathrm{i}C^{1313}|\mathbf{m}|\\
0 & C^{2323}a^2& 0\\
\mathrm{i}\sqrt{C^{1313}C^{3333}}\alpha^2_2-\mathrm{i}C^{1133}|\mathbf{m}| & 0 & C^{3333}c^2
\end{array}
\right),
\end{equation}
with
\[\alpha^2_1=\frac{1}{1+\gamma^2}\frac{(C^{1133}+C^{1313})|\mathbf{m}|}{\sqrt{C^{1313}C^{3333}}},~~\alpha^2_2=\gamma^2\alpha^2_1,\]
\[c^2=\sqrt{\frac{C^{1313}|\mathbf{m}|^2+\rho}{C^{3333}}-\frac{(C^{1133}+C^{1313})^2|\mathbf{m}|^2}{(1+\gamma^2)^2C^{1313}C^{3333}}},~~b^2=\gamma^2 c^2,\]
\[a^2=\sqrt{\frac{C^{1212}|\mathbf{m}|^2+\rho}{C^{2323}}},~~\gamma^2=\sqrt{\frac{(C^{1111}|\mathbf{m}|^2+\rho)C^{3333}}{(C^{1313}|\mathbf{m}|^2+\rho)C^{1313}}}.\]

\subsection{The reconstruction scheme}\label{rec}${}$\newline\indent
In this section, we give a reconstruction scheme for the material
parameters. The VTI and orthorhombic cases have the same structure: We
only need to consider the $Z^{(11)}(|\mathbf{m}|\mathbf{e}_1)$ element
in (\ref{symbol_O1}) and the $Z^{(11)}(|\mathbf{m}|\mathbf{e}_2)),
Z^{(13)}(|\mathbf{m}|\mathbf{e}_2),
Z^{(31)}(|\mathbf{m}|\mathbf{e}_2),$
$Z^{(33)}(|\mathbf{m}|\mathbf{e}_2)$ elements in
(\ref{symbol_O2}). Indeed, we only need to consider the VTI case.

We first make some basic algebraic observations. We note that $Z$ is a
Hermitian matrix, and contains 4 nonzero elements for the VTI
case. However, we have a total number of 6 unknowns to recover. This
is feasible, because these elements are non-homogeneous in
$\mathbf{m}$. Hence, different values for $|\mathbf{m}|$ give
different information. It also becomes clear why the VTI or
orthorhombic elastic parameters cannot be recovered from elastostatic
data \cite{NT,NTU}. In the VTI case, the surface impedance tensor for
elastostatics is homogeneous in $\mathbf{m}$, and thus we can only have 4
equations for 5 parameters. Then, it is impossible to recover
all the parameters.

We begin with a basic

\begin{lemma}\label{l1}
Consider a rational function 
\[f(t)=a+\frac{c}{t+b}\]
defined on $(0,\infty)$, then $a,b,c$ can be recovered from the values of $f(1),f'(1),f''(1)$.
\end{lemma}
\begin{proof}
It is immediate that
\[f'(1)=-\frac{c}{(1+b)^2},~~~~f''(1)=\frac{2c}{(1+b)^3},\]
from which we recover
\[b=\frac{2f'(1)}{f''(1)}-1.\]
Then we recover
\[a=f(1)+f'(1)(1+b),\]
and then
\[c=(f(1)-a)(1+b),\]
completing the reconstruction.
\end{proof}

\noindent\textit{Step 1.} From $(Z^{(11)}(|\mathbf{m}|\mathbf{e}_2)^2=C^{1212}C^{1313}|\mathbf{m}|^2+\rho C^{1313}$. By taking the difference with $|\mathbf{m}|=1$ and $|\mathbf{m}|=\sqrt{2}$, we recover
\[C^{1212}C^{1313}=(Z^{(11)}(\sqrt{2}\mathbf{e}_2)^2-(Z^{(11)}(\mathbf{e}_2))^2\]
and
\[\rho C^{1313}=(Z^{(11)}(\mathbf{e}_2))^2-C^{1212}C^{1313}.\]

\noindent\textit{Step 2.}  From $Z^{(22)}(|\mathbf{m}|\mathbf{e}_2))$ and $Z^{(33)}(|\mathbf{m}|\mathbf{e}_2))$, we recover
\[d_1(|\mathbf{m}|^2):=\frac{C^{1313}}{C^{3333}}\frac{C^{1111}|\mathbf{m}|^2+\rho}{C^{1313}|\mathbf{m}|^2+\rho}=\frac{(Z^{(22)}(|\mathbf{m}|\mathbf{e}_2))^2}{(Z^{(33)}(|\mathbf{m}|\mathbf{e}_2))^2}.\]
Viewed as a rational function defined on $(0,\infty)$,
\[d(t)=d_1(t^2)=\frac{C^{1111}}{C^{3333}}+\frac{(\rho C^{1313}-\rho C^{1111})/(C^{1313}C^{3333})}{t+\frac{\rho}{C^{1313}}}\]
and applying Lemma \ref{l1}, from the values of $d(1),d'(1),d''(1)$ we recover
\[\frac{C^{1111}}{C^{3333}},~~~~ \frac{\rho}{C^{1313}~}~~\text{and}~~~\frac{\rho C^{1313}-\rho C^{1111}}{C^{1313}C^{3333}}.\]
The recovery of
\[\frac{\rho}{ C^{3333}}= \frac{\rho C^{1313}-\rho C^{1111}}{C^{1313}C^{3333}}+\frac{\rho}{C^{1313}}\frac{C^{1111}}{C^{3333}}\]
follows immediately.

\noindent\textit{Step 3.} With these recoveries, we successively
obtain $\rho, C^{1313}, C^{1212}, C^{3333}$ and then $C^{1111}$.

\noindent\textit{Step 4.}  Now we have sufficient information to recover
 \[\gamma = \sqrt{\frac{(C^{1111}|\mathbf{m}|^2+\rho)C^{3333}}{(C^{1313}|\mathbf{m}|^2+\rho)C^{1313}}}\] 
 as a function of $|\mathbf{m}|$. We note that
\[\frac{C^{1313}}{C^{3333}}(Z^{(33)}(|\mathbf{m}|\mathbf{e}_2))^2=(C^{1313})^2|\mathbf{m}|^2+\rho C^{1313}-\frac{(C^{1133}+C^{1313})^2|\mathbf{m}|^2}{(1+\gamma)^2}.\]
Thus we recover
\[C^{1133}=\left(1+\gamma(1)\right)\sqrt{-\frac{C^{1313}}{C^{3333}}(Z^{(33)}(\mathbf{e}_2))^2+(C^{1313})^2+\rho C^{1313}}-C^{1313}.\]

\noindent\textit{Step 5.} We proceed with recovering the partial derivatives of the material parameters. From the the above procedure and Lemma~\ref{l1}, we obtain
\[\partial\left(\frac{\rho}{C^{1313}}\right)=2\partial\left(\frac{d'(1)}{d''(1)}\right),\]
and
\[\partial(\rho C^{1313})=\partial\left(2(Z^{(11)}(\mathbf{e}_2))^2-(Z^{(11)}(\sqrt{2}\mathbf{e}_2))^2\right),\]
where $\partial$ stands for any $\partial_{y^j},\,j=1,2,3$. It follows that
\begin{eqnarray*}
C^{1313}\partial\rho-\rho\partial C^{1313} &=& 2\partial\left(\frac{d'(1)}{d''(1)}\right) (C^{1313})^2,
\\
C^{1313}\partial\rho+\rho\partial C^{1313} &=& \partial\left(2(Z^{(11)}(\mathbf{e}_2))^2-(Z^{(11)}(\sqrt{2}\mathbf{e}_2))^2\right).
\end{eqnarray*}
Solving the $2$-by-$2$ linear system, we recover $\partial\rho$ and $\partial C^{1313}$.

\noindent\textit{Step 6.} From the relation
\[C^{1212}\partial C^{1313}+C^{1313}\partial C^{1212} =\partial\left((Z^{(11)}(\sqrt{2}\mathbf{e}_2))^2-(Z^{(11)}(\mathbf{e}_2))^2\right),\]
we recover
\[\partial C^{1212}=\frac{1}{C^{1313}}\left(C^{1212}\partial C^{1313}-\partial\left((Z^{(11)}(\sqrt{2}\mathbf{e}_2))^2-(Z^{(11)}(\mathbf{e}_2))^2\right)\right)\]

\noindent\textit{Step 7.} Again, following Lemma~\ref{l1},
\[\partial\left(\frac{C^{1111}}{C^{3333}}\right)=\partial\left(d(1)+d'(1)\left(1+\frac{\rho}{C^{1313}}\right)\right).\]

\noindent\textit{Step 8.} We note that
\[\partial\left(\frac{\rho C^{1313}-\rho C^{1111}}{C^{1313}C^{3333}}\right)=\partial\left(\left(d(1)-\frac{C^{1111}}{C^{3333}}\right)\left(1+\frac{\rho}{C^{1313}}\right)\right),\]
whence
\[C^{3333}\partial\rho-\rho \partial C^{3333}=(C^{3333})^2\partial\left(\left(d(1)-\frac{C^{1111}}{C^{3333}}\right)\left(1+\frac{\rho}{C^{1313}}\right)+\frac{\rho}{C^{1313}}\frac{C^{1111}}{C^{3333}}\right).\]
We recover $ \partial C^{3333}$. Furthermore, we note that
\[C^{3333}\partial C^{1111}-C^{1111}\partial C^{3333}=(C^{3333})^2\partial\left(d(1)+d'(1)\left(1+\frac{\rho}{C^{1313}}\right)\right),\]
from which we recover $\partial C^{1111}$.

%\[\partial C^{1111}=\frac{1}{\rho}\left(\partial(\rho C^{1313})-\partial\left(\left(d(1)-\frac{C^{1111}}{C^{3333}}\right)\left((1+\frac{\rho}{C^{1313}}\right)\right)-C^{1111}\partial\rho\right).\] 
\noindent\textit{Step 9.} We recover
\[\partial C^{1133}=\partial\left(\left(1+\gamma(1)\right)\sqrt{-\frac{C^{1313}}{C^{3333}}(Z^{(33)}(\mathbf{e}_2))^2+(C^{1313})^2+\rho C^{1313}}-C^{1313}\right).\]

\noindent\textit{Step 10.} We recover higher-order derivatives
\[\partial^m\rho,\ \partial^m C^{1111},\ \partial^m C^{1313},\ \partial^m C^{3333},\ \partial^m C^{1133},\ \partial^m C^{1212}\]
from $\partial^m Z$ for $m=2,3,\cdots$, where $\partial^m$ stands for any $\partial_y^\alpha$ with $|\alpha|=m$. The procedures are similar to those for the recovery of the first-order derivatives.

\section{Reconstruction of constant density and stiffness tensor and
         interface curvature condition}\label{homo}${}$\newline\indent
In this section, we assume that the density and stiffness tensor are
constant.  We revisit operator $\mathcal{M}$, define in $(\ref{transformed eq})$,
\[M(\mathbf{m}+q\mathbf{n})=D(\mathbf{n})q^2+(R(\mathbf{m},\mathbf{n})+R(\mathbf{m},\mathbf{n})^T)q+Q(\mathbf{m})+\rho I\]
where $D, R, Q$ are all constant in $y$ and $(\mathbf{m},q)$ is
interpreted as the Fourier dual $y$.

We develop the relation between the surface impedance tensor and the
fundamental solution $\Gamma(y)$ of $\mathcal{M}$, satisfying
\[
\mathcal{M}\Gamma(y)=\delta(y).
\]
 The semiclassical Fourier transform $\hat{\Gamma}(\eta)$ of $\Gamma$,
 \[\hat{\Gamma}(\eta)=\int_{\mathbb{R}^3}e^{-\frac{\mathrm{i}y\cdot\eta}{h}}{}\Gamma(y)\mathrm{d}y\]
satisfies
\[M(\mathbf{m}+q\mathbf{n})\hat{\Gamma}(\mathbf{m}+q\mathbf{n})=I.\]
In this expression,
\begin{equation}\label{decomp}\begin{split}
M(\mathbf{m}+q\mathbf{n})&=(q-S_0(\mathbf{m},\mathbf{n})^*)D(q-S_0(\mathbf{m},\mathbf{n}))\\
&=(q-\overline{S_0(\mathbf{m},\mathbf{n})}^*)D(q-\overline{S_0(\mathbf{m},\mathbf{n})}),
\end{split}\end{equation}
where we use that $S_0^*D-DS_0=R+R^T=\overline{R+R^T}=\overline{S_0}^*D-D\overline{S_0}$, $S_0^*DS_0=Q+\rho I=\overline{S_0}^*T\overline{S_0}$. Here, again, $\text{Spec}(S_0)\subset\mathbb{C}^+$, $\text{Spec}(\overline{S_0})\subset\mathbb{C}^-$.

We denote the semiclassical inverse Fourier transform $\mathcal{F}_{q\rightarrow\sigma,h}^{-1}$ of $\hat{\Gamma}(\mathbf{m}+q\mathbf{n})$ by
\[\mathcal{F}_{q\rightarrow\sigma,h}^{-1}\hat{\Gamma}(\sigma,\mathbf{m},\mathbf{n})=\frac{1}{2\pi h}\int_{-\infty}^{+\infty} e^{\frac{\mathrm{i}q\sigma}{h}}\hat{\Gamma}(\mathbf{m}+q\mathbf{n})\mathrm{d}q.\]
Then
\begin{eqnarray*}
D(hD_\sigma-S_0)\mathcal{F}_{q\rightarrow\sigma,h}^{-1}\hat{\Gamma}(\sigma,\mathbf{m},\mathbf{n}) &=& 0~~~\text{for}~\sigma>0 ,
\\
D(hD_\sigma-\overline{S_0})\mathcal{F}_{q\rightarrow\sigma,h}^{-1}\hat{\Gamma}(\sigma,\mathbf{m},\mathbf{n}) &=& 0~~~\text{for}~\sigma<0 ,
\end{eqnarray*}
while
\[\lim_{\sigma\rightarrow 0+}hD_\sigma\mathcal{F}^{-1}_{q\rightarrow\sigma,h}\hat{\Gamma}(\sigma,\mathbf{m},\mathbf{n})-\lim_{\sigma\rightarrow 0-}hD_\sigma\mathcal{F}^{-1}_{q\rightarrow\sigma,h}\hat{\Gamma}(\sigma,\mathbf{m},\mathbf{n})=I.\]
%\[\lim_{\sigma\rightarrow 0+}D(D_\sigma-S_0)\mathcal{F}_{q\rightarrow\sigma,h}\hat{\Gamma}(\sigma,\mathbf{m},\mathbf{n})=-\mathrm{i}\pi,~\lim_{\sigma\rightarrow 0-}D(D_\sigma-S_0)\mathcal{F}_{q\rightarrow\sigma,h}\hat{\Gamma}(\sigma,\mathbf{m},\mathbf{n})=\mathrm{i}\pi\]
%\[\lim_{\sigma\rightarrow 0+}D(D_\sigma-\overline{S_0})\mathcal{F}_{q\rightarrow\sigma,h}\hat{\Gamma}(\sigma,\mathbf{m},\mathbf{n})=-\mathrm{i}\pi,~\lim_{\sigma\rightarrow 0-}D(D_\sigma-\overline{S_0})\mathcal{F}_{q\rightarrow\sigma,h}\hat{\Gamma}(\sigma,\mathbf{m},\mathbf{n})=\mathrm{i}\pi.\]
From the fact that $\mathcal{F}^{-1}_{q\rightarrow\sigma,h}\hat{\Gamma}(\sigma,\mathbf{m},\mathbf{n})$ is continuous in the variable $\sigma$, we conclude that
\[\begin{split}%\lim_{\sigma\rightarrow 0+}DD_\sigma\left(\mathcal{F}_{q\rightarrow\sigma,h}\hat{\Gamma}(\sigma,\mathbf{m},\mathbf{n})\right)-\lim_{\sigma\rightarrow 0-}DD_\sigma\mathcal{F}_{q\rightarrow\sigma,h}\hat{\Gamma}(\sigma,\mathbf{m},\mathbf{n})\\
D(S_0-\overline{S_0})\mathcal{F}^{-1}_{q\rightarrow\sigma,h}\hat{\Gamma}(0,\mathbf{m},\mathbf{n})
= I.\end{split}\]
%\[\left(\hat{T}(hD_{x_3})^2+(\hat{R}+\hat{R}^T)hD_{x_3}+\hat{Q}+\rho I\right)\Gamma(x_3,\xi')=\frac{1}{(2\pi h)^2}\delta_0(x_3).\]
%By (\ref{decomp}), $\Gamma(x_3,\xi')$ satisfies
%\begin{eqnarray}
%(hD_{x_3}-S_0(\xi'))\Gamma(x_3,\xi')=0,~~\text{for}~x_3>0,\\
%(hD_{x_3}-\overline{S_0(\xi')})\Gamma(x_3,\xi')=0,~~\text{for}~x_3<0,\\
%\Gamma(0+,\xi')=\Gamma(0-,\xi'),\\
%\left(\hat{T}(hD_{x_3})+(\hat{R}+\hat{R}^T)\right)(\Gamma(0+,\xi')-\Gamma(0-,\xi'))=\frac{\mathrm{i}}{(2\pi h)^2} I.
%\end{eqnarray}
%
%Solve the above system, we use the ansatz
%\begin{equation}
%\Gamma(x_3,\xi')=
%\begin{cases}
%e^{\frac{\mathrm{i}x_3 S_0(\xi')}{h}}A_+(\xi')~~~\text{for}~x_3>0,\\
%e^{\frac{\mathrm{i}x_3 \overline{S_0(\xi')}}{h}}A_-(\xi')~~~\text{for}~x_3<0.\\
%\end{cases}
%\end{equation}
%
%Then, we have
%\begin{eqnarray}
%A_+(\xi')-A_-(\xi')=0,\\
%\hat{T}(S_0(\xi')A_+(\xi')-\overline{S_0(\xi')}A_-(\xi'))=\frac{\mathrm{i}}{(2\pi h)^2}I.
%\end{eqnarray}
%
%Denote $A(\xi')=A_+(\xi')=A_-(\xi')$, we have
%\[\hat{T}\left(S_0(\xi')-\overline{S_0(\xi')}\right)A(\xi')=\frac{\mathrm{i}}{(2\pi h)^2}I.\]
%Then, with $2\mathrm{i}\mathrm{Im} \{S_0(\xi')\}=S_0(\xi')-\overline{S_0(\xi')}$, we derive
%
%\begin{equation}
%\Gamma(x_3,\xi')=
%\begin{cases}
%-\frac{1}{8h^2}e^{\frac{\mathrm{i}x_3 S_0(\xi')}{h}}(\hat{T}\mathrm{Im} \{S_0(\xi')\})^{-1}~~~\text{for}~x_3>0,\\
%-\frac{1}{8h^2}e^{\frac{\mathrm{i}x_3 \overline{S_0(\xi')}}{h}}(\hat{T}\mathrm{Im} \{S_0(\xi')\})^{-1}~~~\text{for}~x_3<0.
%\end{cases}
%\end{equation}
%
%
Recalling that
\[Z(\mathbf{m},\mathbf{n})=-\mathrm{i}(DS_0(\mathbf{m},\mathbf{n})+R^T(\mathbf{m},\mathbf{n})),\]
we find that
\[-\mathrm{i}D(S_0(\mathbf{m},\mathbf{n})-\overline{S_0}(\mathbf{m},\mathbf{n}))=2D\mathrm{Im} \{S_0(\mathbf{m},\mathbf{n})\}=2\mathrm{Re} \{Z(\mathbf{m},\mathbf{n})\}.\]
Therefore,
 \[\mathcal{F}^{-1}_{q\rightarrow\sigma,h}\hat{\Gamma}(0,\mathbf{m},\mathbf{n})=\frac{1}{2}\left(\text{Re}\{Z(\mathbf{m},\mathbf{n})\}\right)^{-1}.\]
%\begin{equation}
%\Gamma(x_3,\xi')=
%\begin{cases}
%-\frac{1}{8h^2}e^{\frac{\mathrm{i}x_3 S_0(\xi')}{h}}(\mathrm{Im} \{Z(\mathbf{m}\times\mathbf{n})\})^{-1}~~~\text{for}~x_3>0,\\
%-\frac{1}{8h^2}e^{\frac{\mathrm{i}x_3 \overline{S_0(\xi')}}{h}}(\mathrm{Im} \{Z(\mathbf{m}\times\mathbf{n})\})^{-1}~~~\text{for}~x_3<0.
%\end{cases}
%\end{equation}

%We can then conclude that
%\[\Gamma(x_3=0,\xi')=-\frac{1}{8h^2}(\mathrm{Im} \{Z(\mathbf{m}\times\mathbf{n})\})^{-1}\]
%Denote $\hat{\Gamma}(\xi)=\int e^{-\frac{\mathrm{i}x_3\xi_3}{h}}\Gamma(x_3,\xi')\mathrm{d}x_3$, then
%\[\Gamma(x_3=0,\xi')=\int \hat{\Gamma}(\xi_3,\xi')\mathrm{d}\xi_3.\]
We identify
\begin{equation}X\hat{\Gamma}(\mathbf{m},\mathbf{n})=\int_{\mathbb{R}}\hat{\Gamma}(\mathbf{m}+s\mathbf{n})\mathrm{d}s=2\pi h\mathcal{F}^{-1}_{q\rightarrow\sigma,h}\hat{\Gamma}(0,\mathbf{m},\mathbf{n}).\end{equation}
as the X-ray transform of $\hat{\Gamma}$ along $\mathbf{n}$. Thus
\[\Gamma(y)=\int e^{-\frac{\mathrm{i}y\cdot\mathbf{m}}{h}}X\hat{\Gamma}(\mathbf{m},\mathbf{n})\mathrm{d}\mathbf{m},\]
for any $y\perp\mathbf{n}$. We say $\Sigma$ is curved, if $\Sigma$ is locally represented by the graph of a function $y^3=\varphi(y^1,y^2)$ such that $D^2\varphi$ does not vanish. If $\Sigma$ is curved, 
we know $Z(\mathbf{m},\mathbf{n})$ for $\mathbf{n}$ in a continuous curve, joining two different points,
on $S^2$. Then we know $\Gamma(y)$ for $y$ in an open subset of
$\mathbb{R}^3\setminus\{0\}$. Since $\Gamma(y)$ is analytic in
$\mathbb{R}^3\setminus\{0\}$, we can recover $\Gamma(y)$, and thus
$\hat{\Gamma}(\eta)$ for all $\eta\in\mathbb{R}^3$. Following
\cite{CN} we then complete the reconstruction of the stiffness tensor
$\mathbf{C}$ and the density $\rho$. The curved boundary condition is first introduced in \cite{AdHG} for the recovery of a piecewise homogeneous, fully anisotropic conductivity.

\bibliographystyle{abbrv}

\begin{thebibliography}{aa}
\bibitem{BdHFVZ} E. Beretta, M. V. de Hoop, E. Francini, S. Vessella, J. Zhai, \emph{Uniqueness and Lipschitz stability of an inverse boundary value problem for time-harmonic elastic waves},  Inverse Problems, 33 (2017), 035013.
\bibitem{AdHG} G. Alessandrini, M. V. de Hoop, R. Gaburro, \emph{Uniqueness for the electrostatic inverse boundary value problem with piecewise constant anisotropic conductivities}, Inverse Problems, 33 (2017). 125013.
\bibitem{Beli} M. Belishev, \emph{On an approach to multidimentional inverse problems for the wave equation} (in Russian), Dokl. Akad. Nauk SSSR, 297 (1987), pp. 524-527.
\bibitem{Bhatta} S. Bhattacharyya, \emph{Local Uniqueness of The Density From Partial Boundary Data for Isotropic Elastodynamics}, preprint (2018)
\bibitem{CadaydHKU} P. Caday, M. V. de Hoop, V. Katsnelson, G. Uhlmann, \emph{Scattering control for the wave equation with unknown wavespeed}, arXiv: 1701.01070 (2017).
\bibitem{CN} C. I. C\^{a}rstea,  N. Honda, G. Nakamura, {\em Uniqueness in the inverse boundary value problem for piecewise homogeneous anisotropic elasticity}, SIAM J. Appl. Math., 50 (2018), pp. 3291-3302.
\bibitem{dHINZ} M. V. de Hoop, A. Iantchenko, G. Nakamura, J. Zhai, {\em Semiclassical analysis of elastic surface waves}, arXiv:1709.06521, (2017).
\bibitem{dHNZ} M. V. de Hoop, G. Nakamura, J. Zhai, \emph{Reconstruction of Lam\'{e} moduli and density at the boundary enabling directional elastic wavefield decomposition},  SIAM J. Appl. Math., 77 (2017), pp. 520-536.

\bibitem{Evans} L. Evans, \emph{Partial differential equations}, 2nd ed., \textit{American Mathematical Society}, 2010.
\bibitem{GLR} I. Gohberg, P. Lancaster and L. Rodman, \emph{Matrix Polynomials}, \textit{Academic Press}, New York, 1982.
\bibitem{Mus} M. J. P. Musgrave,  \emph{Crystal acoustics}. Vol. 197, \textit{Holden-Day}, San Francisco, 1970.
\bibitem{Ike} M. Ikehata, \emph{Reconstruction of inclusion from boundary measurements}, J. Inv. Ill-Posed Problem, 10 (2002), pp. 37-66.
\bibitem{JR} P. Johnson, P. Rasolofossan, \emph{Nonlinear elasticity and stress?induced anisotropy in rock}, Journal of Goephysical Research: Solid Earth, 101 (1996), pp. 3113-3124.
\bibitem{KV} R. Kohn, M. Vogelius, \emph{Determining conductivity by boundary measurements II. Interior Results}, Comm. Pure Appl. Math., 38 (1985) pp. 643-667.
\bibitem{KL} Y. Kurylev, M. Lassas, \emph{Hyperbolic inverse boundary-value problem and time-continuation of the non-stationary Dirichlet-to-Neumann map}, Proc. Roy. Soc. Edinburgh, Sect. A132 (2002), pp.931-949

\bibitem{LM} I. Lasiecka, R. Triggiani,, \emph{Regularity of hyperbolic equations under $L_2(0,T; L_2(\Gamma))$-Dirichlet boundary terms}, Applied Mathematics and Optimization, 10 (1983), pp. 275-286.
\bibitem{M} S. McDowall, \emph{Boundary determination of material parameters from electromagnetic boundary information}, Inverse Problems, 13 (1997), pp. 153-143.

\bibitem{NU3} G. Nakamura, G. Uhlmann, \emph{Inverse problems at the boundary for an elastic medium}, Siam J. Appl. Math., 2 (1995), pp. 263-279.
\bibitem{NU} G. Nakamura, G. Uhlmann, \emph{A layer stripping algorithm in elastic impedance tomography}, Inverse Problems in Wave Propagation, IMA Vol. Math. Appl. 90, \textit{Springer-Verlag}, New York, 1997, 375-384.
\bibitem{NT} G. Nakamura, K. Tanuma, \emph{A nonuniqueness theorem for an inverse boundary value problem in elasticity}, Siam J. Appl. Math., 56 (1996), pp. 602-610.
\bibitem{NTU} G. Nakamura, K. Tanuma, G. Uhlmann, \emph{Layer stripping for a transversely isotropic elastic medium}, Siam J. Appl. Math., 59 (1999), pp. 1879-1891.
\bibitem{Oksanen} L. Oksanen, \emph{Inverse obstacle problem for the non-stationary wave equation with an unknown background}, Comm. P. D. E.,  38 (2013), pp. 1492-1518.
\bibitem{Rachelle} L. Rachele, \emph{Boundary determination for an inverse problem in elastodynamics}, Comm. Partial Differential Equations, 25 (2000), pp. 1951-1996
\bibitem{Rachelle1} L. Rachele, \emph{Uniqueness of the density in an inverse problem for isotropic elastodynamics}. Trans. Amer. Math. Soc., 355 (2003), 4781-4806.
\bibitem{Rachelle2} L. Rachele, \emph{An inverse problem in elastodynamics: uniqueness of the wave speeds in the interior}. J. Differential Equations, 162 (2000), pp. 300-325.
\bibitem{S} E. Somersalo, \emph{Layer stripping for time-harmonic Maxwell's equations with high frequency}, Inverse Problems, 10 (1994), pp. 449-466.
\bibitem{SU} J. Sylvester, G. Uhlmann, \emph{Inverse boundary value problems at the boundary-continuouse dependence}, Comm. Pure Appl. Math., 41 (1988), pp. 197-219.
\bibitem{SUV4} P. Stefanov, G. Uhlmann, A. Vasy, \emph{Local recovery of the compressional and shear speeds from the hyperbolic {DN} map}, Inverse Problems, 34 (2017), 014003.
\bibitem{Tanuman} K. Tanuma, \emph{Stroh formalism and Rayleigh waves}, Journal of Elasticity, 89 (2007), pp. 5-154.
\bibitem{Tanuma} K. Tanuma, \emph{Surface-impedance tensors of transversely isotropic elastic materials}, The Quarterly Journal of Mechanics and Applied Mathematics, 49 (1996), pp. 29-48.
\bibitem{TE} T. Thompson, B. Evans, \emph{Stress-induced anisotropy: the effects of stress on seismic wave propagation}, Exploration Geophysics, 31 (2000), pp. 489-493
\bibitem{Treves} J. Treves, \emph{Introduction to pseudodifferential and Fourier integral operators \textnormal{I}},  \textit{University Series in Mathematics}, 1980.
\bibitem{Zworski}M. Zworski, \emph{Semiclassical analysis}, \textit{American Mathematical Society}, 2012.
\end{thebibliography}

\end{document}